\newcommand{\torus}{{\mathbb{T}}}
\newtheorem{theorem}{Theorem}[section]
\newtheorem{corollary}[theorem]{Corollary}
\newtheorem{lemma}[theorem]{Lemma}
\newtheorem{prop1}[theorem]{Proposition}
\newtheorem{definition}{Definition}[section]
\newenvironment{proof}{{\bf Proof }}{\hbox{~} \hfill \rule{0.5em}{0.5em}\\}
\numberwithin{equation}{section}
\begin{document}
\title{A minimax method  in  shape and topological derivatives and homogenization: the case of Helmholtz equation}
\date{}          

\maketitle
\centerline{\scshape Mame Gor Ngom \footnote{mamegor.ngom@uadb.edu.sn}}
\medskip
{\footnotesize
 \centerline{Universit\'e Alioune Diop de Bambey, }
  \centerline{  Ecole Doctorale des Sciences et Techniques et Sciences de la Soci\'et\'e. }
   \centerline{Laboratoire de Math\'ematiques de la D\'ecision et d'Analyse Num\'erique}
    \centerline{ (L.M.D.A.N) F.A.S.E.G)/F.S.T. }
}

\medskip
\centerline{\scshape Ibrahima Faye \footnote{ibrahima.faye@uadb.edu.sn},  Diaraf Seck \footnote{diaraf.seck@ucad.edu.sn }}
\medskip
{\footnotesize
 \centerline{Universit\'e Alioune Diop de Bambey, UFR S.A.T.I.C, BP 30 Bambey (S\'en\'egal),}
\centerline{  Ecole Doctorale des Sciences et Techniques et Sciences de la Soci\' et\'e. }
   \centerline{Laboratoire de Math\'ematiques de la D\'ecision et d'Analyse Num\'erique}
\centerline{ (L.M.D.A.N). }
} 

\pagestyle{myheadings}
 \renewcommand{\sectionmark}[1]{\markboth{#1}{}}
\renewcommand{\sectionmark}[1]{\markright{\thesection\ #1}}
\begin{abstract}\noindent 
In this paper, we perform a rigourous version of shape and topological derivatives for optimizations problems under constraint Helmoltz problems. A shape and topological optimization problem  is formulated by introducing cost  functional. We derive first by considering the lagradian method the shape derivative of the functional. It is also proven a topological derivative  with the same approach. An application to several unconstrained shape functions arising from differential geometry are also given.
\end{abstract}

\noindent
\textbf{Keywords:} Minimax method, shape, topological derivatives, averaged Lagrangian, homogenization, two scale convergence, Helmholtz equation.

\section{Introduction}
This paper deals with the notion of shape and topological derivative of functional under Helmholtz equation as constraint. The shape derivative is a differential on some appropriate metric spaces \cite{DelfourMC, DelfourZol} and references there in. The topological derivative introduced by Sokolowski and Zochowski \cite{SokoloZoch} is only a semi differential, this arises from the fact that the tangent space to the underlying metric space of geometries is only a cone. This type of derivative is obtained by expansion methods. In \cite{DelfourDS}, Delfour introduced the notion of differentials and semi-differentials for metric spaces and geometries. The semi differential of the objective function is obtained by introducing a Lagrangian and the adjoint state of the initial problem. This problem is equivalent to the derivative of the minimax of the
Lagrangian with respect to a positive parameter $s$ as it goes to 0. We have inspired by the results of Sturm in his thesis \cite{These-Sturm} and Delfour and Sturm \cite{Delfour2, Delfour-Sturm}, to introduce the new approach for shape and topological derivative. For this, we introduce first the basic notion in differentials calculus for geometries and some methods to derive shape and topological derivative.  Nowadays, some interesting contributions have already been brought in this topic, a lof of question are still open. We derive the minimax of a Lagrangian with respect to a positive parameter $t.$ 
We improve the minimax approach \cite{Delfour1} introduced by K. Sturm \cite{These-Sturm} to find optimal conditions.

\noindent
Let $\Omega$  be a bounded lipschitzien open domain in $\mathbb R^N,\,\,N\geq2$ with a regular boundary and $\eta$ be the solution of the following Helmhottz equation
\begin{equation}\label{eqq0} -\Delta\eta-k^2\eta= f\,\,\text{in}\,\,\Omega,\,\,\eta=0\,\,\text{on}\,\,\partial\Omega.
\end{equation}
Let us consider the functional defined by
\begin{equation}
J(\Omega)=\int_\Omega\vert\nabla\eta-A\vert^2dx+\int_\Omega\vert\eta-\eta_d\vert^2dx
\end{equation}
where $\eta$ is solution to (\ref{eqq0}), $A $ is a given vector and $\eta_d$ a given target.
Our objective, in this paper, is to compute the shape and topological derivative of functional $J(\Omega_s)=j(\Omega_s,\eta_s),$ where the perturbed domain $\Omega_s$ of $\Omega$ is defined by $\Omega_s= T_s(\Omega)$ or $\Omega_s=\Omega\backslash E_r$ depending on the derivative to be calculated. $\eta_s$ is solution to (\ref{eqq0}) posed in $\Omega_s.$ The idea is to use variational techniques due to Sturm \cite{These-Sturm} and Delfour \cite{Delfour1}. Under useful  hypotheses,  Delfour \cite{Delfour1} and Sturm \cite{These-Sturm}  consider that the topological derivative  of the functional  is equal to the s-derivative of the Lagrangian. The calculation of the shape derivative or the topological derivative is possible only if the limit of a certain quantity $R(s)$ exists. In this work, still inspired by their work, we give a sufficient condition for the existence of this limit and therefore the calculation of the shape  or topological derivatives of the functional $J(\Omega)=j(\Omega, \eta_\Omega),$ where $\eta_\Omega$ is solution to the constraint equation. This necessitates the addition of a new assumption on the solution 
$\eta_{\Omega_s}$ in the perturbed domain $\Omega_s.$ 
The rest of the paper is organized as follows: in section 2, we present the main problem and the  result  obtained from the Lagrangian method. This first result is an adaptation of result due to Delfour \cite{Delfour1} and Sturm \cite{These-Sturm}. In section 3, we derive the topological gradient by using also the minimax Lagrangian method. Some useful hypotheses and various cases are also given. We give also some examples where it is not possible to obtain the topological derivative. In the last section, we give our main result based on  asymptotic analysis tools, to ensure topological derivatives by using the two scales convergence in homogenization. %

\section{Problem formulation and shape derivative}


Let $\Omega\subset \mathbb R^N,\,\,N\geq 2$ be a bounded lipschitzian open  with a regular boundary $\partial\Omega$. Let $\eta=\eta(\Omega)$ be the solution of the following Helmholtz problem
\begin{equation}\label{DFEqHelm0}\left\{\begin{array}{ccc}
-\Delta\eta-k^2\eta=f\,\,\text{in}\;\;\Omega,\\
\eta=0\,\,\text{on}\,\,\partial\Omega.\end{array}\right.
\end{equation}
We consider the functional $J$ defined by
\begin{eqnarray}\label{DFfoncHelm1}
J(\Omega)=\int_\Omega\lvert\nabla\eta-A\rvert^2dx+\int_\Omega\lvert\eta-\eta_d\rvert^2dx
\end{eqnarray}
where $\eta$ is the solution of (\ref{DFEqHelm0}), $A\in (L^2(\Omega))^N$ a given vector of $\mathbb{R}^N$ and $\eta_d\in L^2(\Omega)$ a given function.
We want to calculate the shape derivative of the functional $J$.\\
To compute the shape derivative of $J(\Omega)$ by the velocity method, we perturb the bounded open domain $\Omega$ by a family of $T_s$ diffeomorphisms of $\mathbb{R}^N$, $s\geq 0$ generated by a sufficiently regular velocity field $V$. The perturbed domain $\Omega_s$ is then defined by $\Omega_s=T_s(\Omega)$, $0\leq s\leq \tau$, with $\tau>0$. In most cases \cite{SokoZol,MDEL-ZOL} the shape derivative of $J$ in $\Omega$ the direction $V$ direction is defined as
\begin{eqnarray*}
dJ(\Omega;V)=\lim_{s\searrow 0}\;\;\frac{J(\Omega_s)-J(\Omega)}{s}
\end{eqnarray*}
when the limit exists where $\Omega_s=T_s(\Omega)$. The idea here is to use some techniques due to \cite{Delfour1, Delfour2} to compute the shape derivative of the functional $J,$ defined in the perturbed domain $\Omega_s$ by

\begin{eqnarray}\label{DFfoncHelmpert}
J(\Omega_s)=\int_{\Omega_s}\lvert\nabla\eta_s-A\rvert^2dx+\int_{\Omega_s}\lvert\eta_s-\eta_d\rvert^2dx
\end{eqnarray}
where $\eta_s$ is the solution to the problem
\begin{equation}\label{DFEqHelmpert0}\left\{\begin{array}{ccc}
-\Delta\eta_s-k^2\eta_s=f\;\;\text{in}\,\,\Omega_s\\
\qquad\qquad\quad\eta_s=0\;\;\text{on}\,\,\partial\Omega_s,\end{array}\right.
\end{equation}
with $\partial\Omega_s$ is the boundary of $\Omega_s$.\\
The variational formulation of (\ref{DFEqHelmpert0}) is to find $\eta_s\in H^1_0(\Omega_s)$ such that
\begin{eqnarray}\label{DFEqHelmpert}
\int_{\Omega_s}\nabla\eta_s\nabla v'-k^2\eta_s v'dx=\int_{\Omega_s} fv' dx,\;\text{for all}\;v'\in H^1_0(\Omega_s).
\end{eqnarray}
We see clearly that the space of functions $H^1_0(\Omega_s)$ depends on the parameter $s$. To obtain a space independent of $s$, we introduce the following parametrization
\begin{eqnarray}
H^1_0(\Omega_s)=\{\varphi\circ T^{-1}_s,\;\;\varphi\in H^1_0(\Omega) \}.
\end{eqnarray}
Therefore to work in the fixed domain $H^1_0(\Omega)$, we introduce the compositions $\eta^s=\eta_s\circ T_s$ and $v=v'\circ T_s$. The variational formulation of (\ref{DFEqHelmpert}) then amounts to finding $\eta^s\in H^1_0(\Omega)$ such that
\begin{align}
\int_{\Omega_s}\nabla(\eta^s\circ T_s^{-1})\nabla(v\circ T_s^{-1}) -k^2(\eta^s\circ T_s^{-1}) (v\circ T_s^{-1})dx=\int_{\Omega_s} f(v\circ T_s^{-1}) dx,\;\;\forall\;v\in H^1_0(\Omega).
\end{align}
Applying the change of variables formula, we get
\begin{eqnarray*}
\int_{\Omega}\left\lbrace \nabla(\eta^s\circ T_s^{-1})\nabla(v\circ T_s^{-1}) -k^2(\eta^s\circ T_s^{-1}) (v\circ T_s^{-1})\right\rbrace\circ T_sJ_s dx=\int_{\Omega}\left\lbrace f(v\circ T_s^{-1})\right\rbrace\circ T_sJ_s dx.
\end{eqnarray*}
Furthermore, we have
\begin{eqnarray*}
\int_{\Omega}\left\lbrace k^2(\eta^s\circ T_s^{-1}) (v\circ T_s^{-1})\right\rbrace\circ T_sJ_s dx&=&
\int_{\Omega}k^2(\eta^s\circ T_s^{-1}\circ T_s) (v\circ T_s^{-1}\circ T_s)J_s dx\\&=&\int_\Omega k^2\eta^s vJ_sdx
\end{eqnarray*}
\begin{eqnarray*}
\int_{\Omega}\left\lbrace f(v\circ T_s^{-1})\right\rbrace\circ T_sJ_s dx=\int_{\Omega} (f\circ T_s) (v\circ T_s^{-1}\circ T_s)J_s dx=\int_{\Omega} f\circ T_svJ_s dx
\end{eqnarray*}
Let $DT_s(X)$ be the Jacobian matrix of $T_s$ evaluated in $X$, $^*DT_s(X)$ the transpose matrix of $DT_s(X)$, then we have the following proposition

We are therefore able to define the $s$-dependent Lagrangian:
\begin{eqnarray}\label{MG}
L(s,\varphi,\phi)&=&\int_\Omega \left\lbrace B(s)(\nabla\varphi-\nabla\tilde{\eta}^s)\right\rbrace\cdot(\nabla\varphi-\nabla\tilde{\eta}^s)+\lvert\varphi-\eta_d\circ T_s\rvert^2J_sdx+\\
[0.2cm]&+&
\label{MG1}\int_{\Omega}B(s) \nabla \varphi\cdot\nabla\phi - k^2\varphi\phi J_s- f\circ T_s\phi J_s dx
\end{eqnarray}
and defined $j(s)$ as follows
\begin{eqnarray*}
g(s)=\inf_{\varphi\in H^1_0(\Omega)}\sup_{\phi\in H^1_0(\Omega)}L(s,\varphi,\phi);\;\;dg(0)=\lim_{s\searrow 0}\;(g(s)-g(0))/s=dJ(\Omega;V(0)).
\end{eqnarray*}
In (\ref{MG})-(\ref{MG1}), $B(s)$ designate the following formula
$B(s)=J_sDT^{-1}_s(DT^{-1}_s)^{*}$ where $J_s=det DT_s.$
\noindent
For $V\in C^0([0,\tau];C_0^1(\mathbb{R}^N,\mathbb{R}^N))$ and the diffeomorphism $T_s(V)=T_s,$ we have
\begin{eqnarray*}
\frac{d}{ds}T_s(X)=V(s,T_s(X)),\,\;\;T_0(X)=X,\,\;\;\frac{dT_s}{ds}=V(s)\circ T_s,\,\;\;T_0=I.
\end{eqnarray*}
where $V(s)(X)=V(s,X)$ and $I$ the identity matrix in $\mathbb{R}^N$. However
\begin{eqnarray*}
\frac{d}{ds}DT_s=DV(s)\circ T_sDT_s,\,\;\;DT_0=I,\,\;\;\frac{d}{ds}J_s=divV(s)\circ T_sJ_s,\,\;\;J_0=1
\end{eqnarray*}
where $DV(s)$ and $DT_s$ are the Jacobian matrices of $V(s)$ and $T(s)$.\\For $k\geq 1$, $C_0^k(\mathbb{R}^N,\mathbb{R}^N)$ is the space of $k$ times continuously differentiable functions from $\mathbb{R}^N$ to $\mathbb{R}^N$ tending to zero at infinity; for $k=0$, $C_0^0(\mathbb{R}^N,\mathbb{R}^N)$ is the space of continuous functions from $\mathbb{R}^N$ to $\mathbb{R}^N$ tending to zero at infinity.

\noindent
We will use the following notations
\begin{eqnarray*}
V_s=V(s)\circ T_s,\;\;V_s(X)=V(s,T_s(X)),\;\;j(s)=T_s-I,\;\;l(s)(x)=T_s(X)-X.
\end{eqnarray*} In the following, we give the following lemma, which can found in \cite{MDEL-ZOL}.
\begin{lemma}\label{DFdellemme} \item
	Assume that $V\in C^0([0,\tau];C_0^1(\mathbb{R}^N,\mathbb{R}^N))$, $l\in C^0([0,\tau];C_0^1(\mathbb{R}^N,\mathbb{R}^N))$.\\For $\tau>0$ sufficiently small $J_s=\det DT_s=\lvert\det DT_s\rvert=\lvert J_s\rvert$, $0\leq s\leq \tau$ and there are constants $0<\alpha<\beta$ such that
	\begin{eqnarray}
		\forall\;\xi\in\mathbb{R}^N,\;\alpha\lvert\xi\rvert^2\leq A(s)\xi\cdot\xi\leq \beta\lvert\xi\rvert^2\;\;\text{and}\;\;\alpha\leq J_s\leq\beta.
	\end{eqnarray}
	\begin{itemize}
		\item[(i)] As $s$ goes to zero, $V_s\rightarrow V(0)$ in $C_0^1(\mathbb{R}^N,\mathbb{R}^N)$, $DT_s\rightarrow I$ in $C_0^0(\mathbb{R}^N,\mathbb{R}^N)$, $J_s\rightarrow 1$ in $C_0^0(\mathbb{R}^N,\mathbb{R})$
		\begin{eqnarray*}
			\frac{DT_s-I}{s}\;\;\text{is bounded in}\;C_0^0(\mathbb{R}^N,\mathbb{R}^N),\;\;\frac{J_s-1}{s}\;\;\text{is bounded in}\;C_0^0(\mathbb{R}^N).
		\end{eqnarray*}
		\item[(ii)] As $s$ goes to zero
		\begin{eqnarray*}
			B(s)\rightarrow I\;\;\text{in}\;C_0^0(\mathbb{R}^N,\mathbb{R}^N),\;\;\;\frac{B(s)-I}{s}\;\;\text{is bounded in}\;C_0^0(\mathbb{R}^N;\mathbb{R}^N)
		\end{eqnarray*}
		$B'(s)=divV_sI-DV_s-DV_s^*\rightarrow B'(0)=divV(0)-DV(0)-DV(0)^*$ in $C_0^0(\mathbb{R}^N;\mathbb{R}^N)$, \\where $DV_s$ is the Jacobian matrix of $V_s$ and $DV_s^*$ the transpose matrix of $DV_s$.
		\item[(iii)] Given $h\in H^1(\mathbb{R}^N)$, as $s$ goes to zero
		\begin{eqnarray*}
			h\circ T_s\rightarrow h\;\;\text{in}\;\;L^2(\Omega),\;\;\frac{h\circ T_s-h}{s}\;\;\text{is bounded in}\;\;L^2(\Omega)
		\end{eqnarray*}
		\begin{eqnarray*}
			\nabla h\cdot V_s\rightarrow\nabla h\cdot V(0)\;\;\text{in}\;\;L^2(\Omega).
		\end{eqnarray*}
	\end{itemize}
\end{lemma}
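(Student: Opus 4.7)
The plan is to read off every conclusion from the ODE system governing $T_s$ and its derivatives. The hypothesis $V\in C^0([0,\tau];C_0^1(\mathbb{R}^N,\mathbb{R}^N))$ together with the Cauchy--Lipschitz theorem, applied with the spatial argument $X\in\mathbb{R}^N$ playing the role of a parameter, produces the flow $T_s$ satisfying $\tfrac{d}{ds}T_s=V(s)\circ T_s$, $T_0=I$, and its linearisation $\tfrac{d}{ds}DT_s=(DV(s)\circ T_s)\,DT_s$ with $DT_0=I$. Because $J_0=\det DT_0=1$ and $s\mapsto J_s$ is continuous in $C_0^0$, shrinking $\tau$ keeps $J_s$ strictly positive, so $J_s=|J_s|$; the same continuity gives the uniform bounds $\alpha\leq J_s\leq\beta$ on $[0,\tau]$.

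For (i), the convergences $DT_s\to I$ and $J_s\to 1$ in $C_0^0$ are the standard continuous-dependence statements for these ODEs, while $V_s=V(s)\circ T_s\to V(0)$ in $C_0^1$ follows by combining $V\in C^0([0,\tau];C_0^1)$ with $T_s\to I$ in $C^0$ and the chain rule. Boundedness of the difference quotients $(DT_s-I)/s$ and $(J_s-1)/s$ in $C_0^0$ is immediate from the $C^1$ regularity of $s\mapsto DT_s$ and $s\mapsto J_s$ as curves into $C_0^0$.

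For (ii), writing $B(s)=J_s\,(DT_s)^{-1}(DT_s)^{-*}$ and applying the product rule yields
\begin{equation*}
B'(s)=(\mathrm{div}\,V_s)\,(DT_s)^{-1}(DT_s)^{-*}\;-\;J_s(DT_s)^{-1}\bigl(DV_s+DV_s^{*}\bigr)(DT_s)^{-*},
\end{equation*}
which at $s=0$ collapses to $\mathrm{div}\,V(0)\,I-DV(0)-DV(0)^{*}$; the convergence $B(s)\to I$ in $C_0^0$ follows factor by factor. The uniform ellipticity $\alpha|\xi|^2\leq B(s)\xi\cdot\xi\leq\beta|\xi|^2$ is then inherited by perturbation from $B(0)=I$, after possibly further shrinking $\tau$.

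Part (iii) is the only step requiring a density argument and is the main technical obstacle. For $h\in C_c^{\infty}(\mathbb{R}^N)$ the identity
\begin{equation*}
h\circ T_s-h=\int_0^s (\nabla h\circ T_\sigma)\cdot V_\sigma\,d\sigma
\end{equation*}
gives both $h\circ T_s\to h$ in $L^2(\Omega)$ and the bound $\|(h\circ T_s-h)/s\|_{L^2}\leq\sup_\sigma\|V_\sigma\|_\infty\,\|\nabla h\|_{L^2(\mathbb{R}^N)}$. To extend to $h\in H^1(\mathbb{R}^N)$, I would approximate $h$ in $H^1$ by smooth compactly supported functions and transport $L^2$ estimates through the change of variable $x=T_s(X)$, using the uniform Jacobian bounds $\alpha\leq J_s\leq\beta$ (so that $\beta^{-1}\leq J_s^{-1}\leq\alpha^{-1}$) to control how $L^2$ norms move between $\Omega$ and $T_s(\Omega)$. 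The convergence $\nabla h\cdot V_s\to\nabla h\cdot V(0)$ in $L^2(\Omega)$ is handled identically, combining $V_s\to V(0)$ in $C_0^0$ with an $L^2$-density argument on $\nabla h$.
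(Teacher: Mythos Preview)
The paper does not supply its own proof of this lemma: its entire argument is ``See \cite{MDEL-ZOL}.'' Your sketch is a correct and self-contained outline of the standard argument one finds in that reference, based on the ODEs for $T_s$, $DT_s$, and $J_s$ together with a density argument for part~(iii). One minor slip: in your product-rule expression for $B'(s)$ the first term should carry a factor $J_s$ as well (since $J_s'=(\mathrm{div}\,V_s)J_s$), though this is invisible at $s=0$ and does not affect the stated limit $B'(0)=\mathrm{div}\,V(0)\,I-DV(0)-DV(0)^{*}$.
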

\begin{proof} See \cite{MDEL-ZOL}.
\end{proof}
%
In the following, we need to calculate the derivatives of the Lagrangian with respect to the variables $s,\, \varphi$ and $\phi.$ when the limits exist, in oder to get the shape derivative of the funtionnal. In this case we have
\begin{eqnarray*}
d_sL(0,x,y)=\lim_{s\searrow 0}\frac{L(s,x,y)-L(0,x,y)}{s}\\\varphi\in X,\;\;d_xL(s,x,y;\varphi)=\lim_{\theta\searrow 0}\frac{L(s,x+\theta\varphi,y)-L(s,x,y)}{\theta}\\\phi\in Y\;\;d_yL(s,x,y;\phi)=\lim_{\theta\searrow 0}\frac{L(s,x,y+\theta\phi)-L(s,x,y)}{\theta}
\end{eqnarray*}
$s\searrow 0$ and $\theta\searrow 0$ mean that $s$ and $\theta$ go to $0$ by strictly positive values.\\ Thus, we have, since $L(s,x,y)$ is affine en $y$, for all $(s,x)\in [0,\tau]\times X$,
\begin{eqnarray}
\forall\;y,\psi\in H_0^1(\Omega)\;\;d_yL(s,x,y;\psi)=L(s,x,\psi)-L(s,x,0)=d_yG(s,x,0,\psi).
\end{eqnarray}
Recall that the $s$ dependent lagragian is given by
 \begin{eqnarray} L(s,\varphi,\phi)&=&\int_\Omega \left\lbrace B(s)(\nabla\varphi-\nabla\tilde{\eta}^s)\right\rbrace\cdot(\nabla\varphi-\nabla\tilde{\eta}^s)+\lvert\varphi-\eta_d\circ T_s\rvert^2J_sdx+\\
[0.2cm]&+&
\label{exam0}\int_{\Omega}B(s) \nabla \varphi\cdot\nabla\phi - k^2\varphi\phi J_s- f\circ T_s\phi J_s dx\end{eqnarray} 
The set of solutions (states) $x^s$ at $s\geq 0$ is  denoted
\begin{eqnarray}
E(s)=\left\{ x^s\in H_0^1(\Omega),\;\;\forall\;\psi\in H_0^1(\Omega),\;\;d_yL(s,x^s,0;\psi)=0 \right\}.
\end{eqnarray}
The set of solutions $p^s$ at $s\geq 0$ is denoted
\begin{eqnarray}
Y(s,x^s)=\left\{ p^s\in H_0^1(\Omega),\;\;\forall\;\varphi\in H_0^1(\Omega),\;\;d_xL(s,x^s,p^s;\varphi)=0 \right\}.
\end{eqnarray}
Finally the set of minimisers for the minimax is given by
\begin{eqnarray}
X(s)=\left\{ x^s\in H_0^1(\Omega),\;\,g(s)=\inf_{x\in H_0^1(\Omega)}\sup_{y\in H_0^1(\Omega)}L(s,x,y)=\sup_{y\in H_0^1(\Omega)}L(s,x^s,y) \right\}.
\end{eqnarray}
Following the work of Delfour \cite{Delfour1}, we have the following lemma giving the way to get the infimum of the Lagragian.

\begin{lemma} \textnormal{\textbf{(Constrained infimum and minimax)}}\\
We have the following assertions
\begin{itemize}
\item[(i)] 
\begin{align*}
\inf_{x\in H_0^1(\Omega)}\sup_{y\in H_0^1(\Omega)}L(s,x,y)=\inf_{x\in E(s)}L(s,x,0)
\end{align*}
\item[(ii)] The minimax $g(s)=+\infty$ if and only if $E(s)=\emptyset$. And in this case we have $X(s)=X$.
\item[(iii)] If $E(s)\neq\emptyset$, then 
\begin{align*}
X(s)=\left\{x^s\in E(s):\;\;G(s,x^s,0)=\inf_{x\in E(s)}G(s,x,0) \right\}\subset E(s)
\end{align*}
and $g(s)<+\infty$.
\end{itemize}
\end{lemma}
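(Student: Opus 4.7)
The plan is to exploit the key structural feature mentioned just before the lemma: for every fixed $(s,x)$, the Lagrangian $L(s,x,\cdot)$ is affine in $y$, so one can write
\begin{equation*}
L(s,x,y)=L(s,x,0)+d_yL(s,x,0;y),
\end{equation*}
where $y\mapsto d_yL(s,x,0;y)$ is a continuous linear form on $H_0^1(\Omega)$. The supremum over $y\in H_0^1(\Omega)$ of an affine functional is therefore $L(s,x,0)$ if the linear part vanishes, and $+\infty$ otherwise. Since the linear part vanishes for every test $y$ precisely when $x\in E(s)$, this immediately yields
\begin{equation*}
\sup_{y\in H_0^1(\Omega)}L(s,x,y)=\begin{cases} L(s,x,0), & x\in E(s),\\ +\infty, & x\notin E(s),\end{cases}
\end{equation*}
from which (i) follows by taking the infimum over $x$ and discarding the points at which the inner supremum is $+\infty$.

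For (ii), I would argue from the identity above. If $E(s)=\emptyset$, then $\sup_yL(s,x,y)=+\infty$ for every $x\in H_0^1(\Omega)$, hence the infimum is $+\infty$ and every $x$ is trivially a minimiser, so $X(s)=H_0^1(\Omega)=X$. Conversely, if $E(s)\neq\emptyset$, pick any $x_0\in E(s)$; then $\sup_yL(s,x_0,y)=L(s,x_0,0)$ is finite (the Lagrangian is defined by finite integrals against $H_0^1$ functions), so $g(s)\leq L(s,x_0,0)<+\infty$.

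For (iii), once (i) is known, the minimax coincides with $\inf_{x\in E(s)}L(s,x,0)$. A minimiser $x^s\in X(s)$ must achieve this infimum, and since the infimum is taken over $E(s)$, any such $x^s$ automatically belongs to $E(s)$. Rewriting $L(s,x^s,0)=G(s,x^s,0)$ (using the notation introduced before the lemma for the part of $L$ not depending on $y$) gives exactly the claimed characterisation $X(s)=\{x^s\in E(s):G(s,x^s,0)=\inf_{x\in E(s)}G(s,x,0)\}\subset E(s)$; combined with the upper bound produced in (ii), we also conclude $g(s)<+\infty$.

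There is no genuine obstacle here: the whole lemma is a bookkeeping consequence of the fact that $L$ is affine in the adjoint variable, together with the elementary observation that an unbounded linear functional has supremum $+\infty$. The only care required is to make sure that $d_yL(s,x,0;\cdot)$ is indeed a continuous linear form on $H_0^1(\Omega)$ (so that vanishing on all $\psi$ is the honest characterisation of $E(s)$), which is clear from the explicit expression in \eqref{exam0} since each term is linear in $\phi$ with $L^2$-type coefficients.
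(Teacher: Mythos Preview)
Your proof is correct and follows essentially the same route as the paper's: both exploit the affineness of $L$ in $y$ to write $L(s,x,y)=L(s,x,0)+d_yL(s,x,0;y)$, obtain the dichotomy $\sup_y L(s,x,y)=L(s,x,0)$ if $x\in E(s)$ and $+\infty$ otherwise, and then read off (i)--(iii) from this formula. The only cosmetic difference is that in (iii) the paper spells out both inclusions of the set equality $X(s)=\{x^s\in E(s):G(s,x^s,0)=\inf_{E(s)}G(s,\cdot,0)\}$ separately, whereas you compress them; your argument still contains both directions once the dichotomy for $\sup_y$ is in hand.
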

\begin{proof}
 (i) Since the function $y\mapsto G(s,x,y)$ is affine, then for all $(s,x,y)$
	\begin{eqnarray*}
		G(s,x,y)=G(s,x,0)+d_yG(s,x,0;y).
	\end{eqnarray*}
In this case we have
	\begin{eqnarray*}
		\sup_{y\in Y} G(s,x,y)=G(s,x,0)+\sup_{y\in Y} d_yG(s,x,0;y)=
		\begin{cases}
			G(s,x,0)\;\;\text{if}\;\;x\in E(s)\\+\infty\;\;\text{if}\;\;x\in X\backslash E(s)
		\end{cases}
	\end{eqnarray*}
In this case we have
	\begin{eqnarray*}
		\inf_{x\in X}\sup_{y\in Y} G(s,x,y)=\inf_{x\in E(s)} G(s,x,0)
	\end{eqnarray*}
	(ii) If $g(s)=+\infty$, then for all $x\in X$, $\sup_{y\in Y}G(s,x,y)=+\infty$ or quite simply
	\begin{eqnarray*}
		G(s,x,0)+\sup_{y\in Y} d_yG(s,x,0;y)=+\infty
	\end{eqnarray*}
	This leads to $\sup_{y\in Y} d_yG(s,x,0;y)=+\infty$ and then there exists $y\in Y$ such that $d_yG(s,x,0;y)> 0$ and then we have $\forall\,x\in X$, $x\notin E(s)$ and $E(s)=E(s)\cap X=\emptyset$.\\
	Conversely if $E(s)=\emptyset$ we have directly by definition of the infinimum
	\begin{eqnarray*}
		g(s)=\inf_{x\in E(s)} G(s,x,0)=+\infty
	\end{eqnarray*}
	(iii) If $E(s)\neq\emptyset$ and $x^s\in X(s)$ and $x^s\in E(s)$ from the definition of $(i)$
	\begin{eqnarray*}
		G(s,x^s,0)\leq \sup_{y\in Y} G(s,x^s,y)=g(s)=\inf_{x\in E(s)} G(s,x,0)\leq +\infty
	\end{eqnarray*} 
	if $x^s\in E(s)$, $\sup_{y\in Y}d_y G(s,x^s,0;y)=+\infty$ and
	\begin{eqnarray*}
		g(s)=\sup_{y\in Y}d_y G(s,x^s,y)=G(s,x,0)+\sup_{y\in Y}d_y G(s,x,0;y)=+\infty
	\end{eqnarray*}
	This contradicts the fact that $g(s)$ is finite. Therefore $X(s)\subset E(s)$ and
	\begin{eqnarray*}
		G(s,x^s,0)\leq \sup_{y\in Y} G(s,x^s,y)=g(s)=\inf_{x\in E(s)} G(s,x,0)\leq G(s,x^s,0)
	\end{eqnarray*} 
	implies that
	\begin{eqnarray*}
		x^s\in X(s),\;\;g(s)=\inf_{x\in E(s)}G(s,x,0).
	\end{eqnarray*}
	Conversely, if there is $x^s\in E(s)$ such that
	\begin{eqnarray*}
		G(s,x^s,0)=\inf_{x\in E(s)}G(s,x,0).
	\end{eqnarray*}
	So for all $y\in Y$, $d_yG(s,x^s,0,y)=0$, $G(s,x^s,y)=G(s,x^s,0)$ and from the definition (i)
	\begin{eqnarray*}
		\sup_{y\in Y}	G(s,x^s,0)= \inf_{x\in E(s)} G(s,x,0)=\inf_{x\in X}\sup_{y\in Y} G(s,x,y)
	\end{eqnarray*} 
	and $x^s \in X(s)$.
\end{proof}
We also need the following assumptions

\textbf{Hypothesis (H0)}\\ 
\begin{itemize}
\item[(i)]: For all $s\in [0,\tau]$, $x^0\in X(0)$, $x^s\in X(s)$ and $y\in  H_0^1(\Omega)$, the function $\theta\mapsto L(s,x^0+\theta(x^s-x^0),y):[0,1]\rightarrow\mathbb{R}$ is absolutely continuous. This implies that for almost all $\theta$ the derivative exists and is equal to
$d_x L(s,x^0+\theta(x^s-x^0),y;x^s-x^0)$ and it is the integral of its derivative. In particular
\begin{eqnarray*}
L(s,x^s,y)=L(s,x^0,y)+\int_0^1d_x L(s,x^0+\theta(x^s-x^0),y;x^s-x^0)d\theta.
\end{eqnarray*} 
\item[ii)]: For all $s\in [0,\tau]$, $x^0\in X(0)$, $x^s\in X(s)$ and $y\in H_0^1(\Omega)$, $\phi\in H_0^1(\Omega)$ and for almost all $\theta\in[0,1]$, $d_x L(s,x^0+\theta(x^s-x^0),y;\phi)$ exist et the functions $\theta\mapsto L(s,x^0+\theta(x^s-x^0),y;\phi)$ belong to $L^1[0,1];$
\end{itemize}
$\textbf{Hypothesis (H1)}$ for all $s\in [0,\tau]$, $g(s)$ is finite, $X(s)=\{x^s\}$ and $Y(s, x^0,x^s)=\{y^s\}$ are singletons\\
$\textbf{Hypothesis (H2)}$ $d_sL(0,x^0,y^0)$  exists.
\\
$\textbf{Hypothesis(H3)}$ 
The following limit exists
\begin{eqnarray*}
R(x^0,y^0)=\lim_{s\searrow 0}\int_0^1 d_xG\left( s, x^0+\theta(x^s-x^0),p^0;\frac{x^s-x^0}{s}\right) d\theta.
\end{eqnarray*}

We have the following result
\begin{theorem} (\cite{These-Sturm,Sturm2}).

Under assymptions \textbf{(H0), (H1), (H2), (H3)}, $dg(0)$ exists and 
\begin{equation}\label{formule}dg(0)=d_sG(0,x^0,p^0)+R(x^0,p^0).\end{equation}
\end{theorem}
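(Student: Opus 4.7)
The plan is to exploit the affine structure of $L$ in $\phi$ together with the state/adjoint characterizations encoded in $E(s)$ and $Y(s,x^s)$, so that $g(s)-g(0)$ becomes an algebraic identity that can be split into a ``pure $s$'' part and a ``pure $x$'' part.

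First, I would establish the key reduction $g(s)=L(s,x^s,\phi)$ for \emph{every} $\phi\in H_0^1(\Omega)$ (and in particular for $\phi=p^0$, the adjoint state at $s=0$). Since $L(s,x,\cdot)$ is affine, one has $L(s,x^s,\phi)=L(s,x^s,0)+d_yL(s,x^s,0;\phi)$; but hypothesis \textbf{(H1)} says $x^s\in X(s)\subset E(s)$, so $d_yL(s,x^s,0;\phi)=0$ for all $\phi$, and the preceding lemma gives $g(s)=L(s,x^s,0)$. The value $g(s)$ is therefore independent of the adjoint direction, which is what allows $p^0$ to be used on both sides.

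Next, I would perform the telescoping
\begin{equation*}
g(s)-g(0)=L(s,x^s,p^0)-L(0,x^0,p^0)=\bigl[L(s,x^s,p^0)-L(s,x^0,p^0)\bigr]+\bigl[L(s,x^0,p^0)-L(0,x^0,p^0)\bigr].
\end{equation*}
Dividing by $s$, the second bracket tends to $d_sL(0,x^0,p^0)$ as $s\searrow 0$ directly by hypothesis \textbf{(H2)}. For the first bracket, the absolute continuity assumed in \textbf{(H0)}(i) yields
\begin{equation*}
L(s,x^s,p^0)-L(s,x^0,p^0)=\int_0^1 d_xL\bigl(s,x^0+\theta(x^s-x^0),p^0;x^s-x^0\bigr)\,d\theta,
\end{equation*}
and after dividing by $s$ and pulling the scalar $1/s$ into the direction (using positive homogeneity of the semi-derivative in its direction argument, which is implicit in \textbf{(H0)}(ii)), the right-hand side is exactly the expression whose limit is assumed to equal $R(x^0,p^0)$ in \textbf{(H3)}. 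Adding the two contributions gives formula \eqref{formule}.

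The main subtlety I expect is justifying that one is allowed to \emph{fix} the second Lagrange multiplier at $p^0$ (rather than using $p^s$) throughout: this is not at all obvious a priori, and is the reason the lemma on constrained infima is needed. Once $g(s)=L(s,x^s,p^0)$ is in hand, the rest is algebra plus the three limiting hypotheses. A secondary technical point is the passage to the limit inside the $\theta$-integral: hypothesis \textbf{(H3)} is precisely tailored to bypass a dominated-convergence argument by \emph{postulating} the limit, so one does not need to exhibit a uniform bound on $\theta\mapsto d_xL(s,x^0+\theta(x^s-x^0),p^0;(x^s-x^0)/s)$. This is the step where a more quantitative theorem would require additional regularity of the map $s\mapsto x^s$ (differentiability in a suitable norm), which the present formulation deliberately avoids.
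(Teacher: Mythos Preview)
Your proposal is correct and follows essentially the same route as the paper: fix the multiplier at $p^0$ via the affine-in-$\phi$ structure (so that $g(s)=L(s,x^s,p^0)$ for every $s$), telescope $g(s)-g(0)$ into a ``pure $s$'' increment and a ``pure $x$'' increment, then invoke \textbf{(H0)} for the integral representation and \textbf{(H2)}, \textbf{(H3)} for the two limits. Your discussion of why one is allowed to freeze the adjoint at $p^0$ is in fact more explicit than the paper's, which simply asserts $g(s)=G(s,x^s,y)$ for each $y$.
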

\begin{proof}

Recall that $g(s)=G(s,x^s,y)$ and $g(0)=G(0,x^0,y)$ for each $y\in Y$, then for a standard adjoint state $p^0$ at $s=0$
\begin{eqnarray*}
g(s)-g(0)=G(s,x^s,p^0)-G(s,x^0,p^0)+(G(s,x^0,p^0)-G(0,x^0,p^0))
\end{eqnarray*}
Dividing by $s> 0$, we obtain
\begin{eqnarray*}
\frac{g(s)-g(0)}{s}&=&\frac{G(s,x^s,p^0)-G(s,x^0,p^0)}{s}+\frac{G(s,x^0,p^0)-G(0,x^0,p^0)}{s}\\&=&\int_0^1 d_xG\left( s, x^0+\theta(x^s-x^0),p^0;\frac{x^s-x^0}{s}\right) d\theta\\&+&\frac{G(s,x^0,p^0)-G(0,x^0,p^0)}{s}
\end{eqnarray*}
Going to the limit when $s$ goes to $0$, we obtain
\begin{eqnarray*}
dg(0)&=&\lim_{s\searrow 0}\int_0^1 d_xG\left( s, x^0+\theta(x^s-x^0),p^0;\frac{x^s-x^0}{s}\right) d\theta+d_sG(0,x^0,p^0)\\&=&d_sG(0,x^0,p^0)+R(x^0,p^0).
\end{eqnarray*}
To prove the formula (\ref{formule}), it is sufficient to verify the hypotheses of the theorem of Delfour \cite{Delfour1}, Sturm \cite{These-Sturm,Sturm2}. For that, let us recall the Lagrangian

\begin{eqnarray} L(s,\varphi,\phi)&=&\int_\Omega \left\lbrace B(s)(\nabla\varphi-\nabla\tilde{\eta}^s)\right\rbrace\cdot(\nabla\varphi-\nabla\tilde{\eta}^s)+\lvert\varphi-\eta_d\circ T_s\rvert^2J_sdx+\\
[0.2cm]&+&
\label{exam0}\int_{\Omega}B(s) \nabla \varphi\cdot\nabla\phi - k^2\varphi\phi J_s- f\circ T_s\phi J_s dx\end{eqnarray} 
Then the state equation is given by\\
\begin{equation}\label{exam} \text{find }\,\,x^s\in H_0^1(\Omega) \,\,\text{such that for all}\,\,\psi\in H_0^1(\Omega),\,\,\int_\Omega B(s)\nabla x^s\nabla \varphi-k^2x_s\varphi=\int_\Omega f(T_s)\varphi J_sdx
\end{equation}
The adjoint state $p^s$  is solution to
\begin{equation}\label{exam1}
\text{Find}\,p^s\in H_0^1(\Omega)\,\,\text{such that}\,\forall\,\varphi\in H_0^1(\Omega),\,\,2\int_\Omega\vert \eta^0-\eta_d(Ts)\vert\varphi-\Delta \eta^0\varphi+\int_\Omega B(s)\nabla p^s\nabla \varphi-k^2p^s\varphi=0.
\end{equation}
The bilinear forms (\ref{exam}) and (\ref{exam1}) are coercives, then there exists a unique  solution $\eta^s$ and $(\eta^0, p^0)$ solution (\ref{exam}) and (\ref{exam1}). This allows us to write that, $\forall s\in[0,\tau]$ the spaces
$$X(s)=\{\eta^s\}\,\,\text{and}\,\,Y(0, u^0)=\{p^0\}$$ are singletons, then \textbf{Hypothesis (H1)} is verified.
\\
For the \textbf{Hypothesis (H2)}, the $s$-derivative of the lagragian $L(s,\varphi,\phi)$ is given by
\begin{eqnarray*}
d_sL(s,\varphi,\phi)&=& \int_\Omega \left\lbrace B'(s)(\nabla\varphi-\nabla\tilde{\eta}^s)\right\rbrace\cdot(\nabla\varphi-\nabla\tilde{\eta}^s)-2B(s)(\nabla\varphi-\nabla\tilde{\eta}^s)\cdot\nabla\left( [\nabla\tilde{\eta}\cdot V(s)]\circ T_s\right) dx\\&+&\int_\Omega \lvert\varphi-\eta_d\circ T_s\rvert^2 divV_sJ_s-2(\varphi-\eta_d\circ T_s)\nabla\eta_d\cdot V_sJ_s dx\\
&+&\int_\Omega B(s)\nabla\varphi\cdot\nabla\phi-k^2\varphi\phi divV_sJ_s-\left(f\circ T_sdivV_sJ_s+\nabla f\cdot V_sJ_s\right)\phi dx.
\end{eqnarray*}
Taking $\varphi=\eta^0$ and $\phi=p^0$, we have
\begin{eqnarray*}
d_sL(s,\eta^0,p^0)&=& \int_\Omega \left\lbrace B'(s)(\nabla\eta^0-\nabla\tilde{\eta}^s)\right\rbrace\cdot(\nabla\eta^0-\nabla\tilde{\eta}^s)-2B(s)(\nabla\eta^0-\nabla\tilde{\eta}^s)\cdot\nabla\left( [\nabla\tilde{\eta}\cdot V(s)]\circ T_s\right) dx
\\&+&\int_\Omega \lvert\eta^0-\eta_d\circ T_s\rvert^2 divV_sJ_s-2(\eta^0-\eta_d\circ T_s)\nabla\eta_d\cdot V_sJ_s dx\\
&+&\int_\Omega B'(s)\nabla\eta^0\cdot\nabla p^0-k^2\eta^0p^0 divV_sJ_s-\left(f\circ T_sdivV_sJ_s+\nabla f\cdot V_sJ_s\right)p^0 dx.
\end{eqnarray*}
Using the lemma \ref{DFdellemme}, we get
\begin{eqnarray*}
d_sL(0,\eta^0,p^0)&=&\int_\Omega \left\lbrace B'(0)(\nabla\eta^0-\nabla\tilde{\eta})\right\rbrace\cdot(\nabla\eta^0-\nabla\tilde{\eta})-2\left(\nabla\eta^0-\nabla\tilde{\eta}\right)\cdot\nabla\left( [\nabla\tilde{\eta}\cdot V(0)]\right) dx
\\&+&\int_\Omega \lvert\eta^0-\eta_d\rvert^2 divV(0)-2(\eta^0-\eta_d)\nabla\eta_d\cdot V(0) dx\\
&+&\int_\Omega B'(0)\nabla\eta^0\cdot\nabla p^0-k^2\eta^0p^0 divV(0)-\left(fdivV(0)+\nabla f\cdot V(0)\right)p^0 dx,
\end{eqnarray*}
then \textbf{Hypothesis (H2)} is satisfied.\\ To prove the last hypothesis, we give
$x$-derivative of the lagrangian $L(s,\varphi,\psi)$ with respect to $\varphi$ in the direction $\varphi'$:
\begin{align}
d_{\varphi}L(s,\varphi,\phi;\varphi')=\int_\Omega 2B(s)(\nabla\varphi-\nabla\tilde{\eta}^s)\cdot\nabla\varphi'+2(\varphi-\eta_d\circ T_s)\varphi'J_s+ B(s)\nabla\varphi'\cdot\nabla\phi-\lambda\varphi'\phi J_s.
\end{align}
The derivative of the $s$-dependent Lagrangian with respect to $\phi$ in the direction $\phi$
\begin{align}
d_{\phi}L(s,\varphi,\phi;\phi')=\int_{\Omega}B(s) \nabla \varphi\cdot\nabla\phi' - k^2\varphi\phi' J_s- f\circ T_s\phi' J_s dx.
\end{align}
The state equation at $s\geq 0$ and the adjoint state equation at $s=0$ are 
\begin{align}\label{DFeqetas}
\eta^s\in H^1_0(\Omega),\;\;\forall\;\phi'\in H^1_0(\Omega),\;\;\int_{\Omega}B(s) \nabla\eta^s\cdot\nabla\phi' - k^2\eta^s\phi' J_s- f\circ T_s\phi' J_s dx=0
\end{align}
\begin{align}\label{DFeqad0}
p^0\in H^1_0(\Omega),\;\forall\;\varphi'\in H^1_0(\Omega),\;\int_\Omega 2(\nabla\eta^0-\nabla\tilde{\eta})\cdot\nabla\varphi'+2(\eta^0-\eta_d)\varphi'+ \nabla p^0\cdot\nabla\varphi '-k^2 p^0\varphi' dx=0.
\end{align}
\begin{eqnarray*}
R(s)&=&\int_0^1 d_{\varphi}L\left(s,\eta^0+\theta(\eta^s-\eta^0),p^0;\frac{\eta^s-\eta^0}{s}\right)d\theta\\[0.2cm]
&=&\int_\Omega 2B(s)\left\lbrace\nabla\left(\frac{\eta^s+\eta^0}{2}\right)-\nabla\tilde{\eta}^s\right\rbrace\cdot\nabla\left(\frac{\eta^s-\eta^0}{s}\right)
+2\left(\frac{\eta^s+\eta^0}{2}-\eta_d\circ T_s\right)\left(\frac{\eta^s-\eta^0}{s}\right)J_sdx
\\[0.2cm]&+&\int_\Omega B(s)\nabla p^0\cdot\nabla\left(\frac{\eta^s-\eta^0}{s}\right)-k^2 p^0\left(\frac{\eta^s-\eta^0}{s}\right)J_sdx.
\end{eqnarray*}
Taking $\varphi'=\frac{\eta^s-\eta^0}{s}$ in the adjoint equation (\ref{DFeqad0}), we have
\begin{align*}
\int_\Omega 2\nabla(\eta^0-\tilde{\eta})\nabla\left(\frac{\eta^s-\eta^0}{s}\right)+2(\eta^0-\eta_d)\left(\frac{\eta^s-\eta^0}{s}\right)+ \nabla p^0\cdot\nabla\left(\frac{\eta^s-\eta^0}{s}\right)-k^2 p^0\left(\frac{\eta^s-\eta^0}{s}\right) dx=0.
\end{align*}
This means that
\begin{eqnarray*}
&\,&\int_\Omega 2\left\lbrace \nabla\left(\frac{\eta^s+\eta^0}{2}\right)-\nabla\left(\frac{\eta^s-\eta^0}{2}\right)-\nabla\tilde{\eta}^s+\nabla\tilde{\eta}^s- \nabla\tilde{\eta}\right\rbrace\cdot\nabla\left(\frac{\eta^s-\eta^0}{s}\right)dx +\\[0.2cm]&\,& \int_\Omega 2\left(\frac{\eta^s+\eta^0}{2}-\frac{\eta^s-\eta^0}{2}-\eta_d\circ T_s+\eta_d\circ T_s-\eta_d\right)\left(\frac{\eta^s-\eta^0}{s}\right)dx+\\[0.2cm]&\,& \int_\Omega\nabla p^0\cdot\nabla\left(\frac{\eta^s-\eta^0}{s}\right)-k^2 p^0\left(\frac{\eta^s-\eta^0}{s}\right) dx=0.
\end{eqnarray*}
In other words,
\begin{eqnarray*}
&\,&\int_\Omega 2\left\lbrace \nabla\left(\frac{\eta^s+\eta^0}{2}\right)-\nabla\tilde{\eta}^s   \right\rbrace\cdot\nabla\left(\frac{\eta^s-\eta^0}{s}\right)dx- s\int_\Omega\left\lvert\nabla\left(\frac{\eta^s-\eta^0}{s}\right)\right\rvert^2 dx
+\\&\,& \int_\Omega 2(\eta_d\circ T_s-\eta_d)\left(\frac{\eta^s-\eta^0}{s}\right)dx+\int_\Omega 2(\nabla\tilde{\eta}^s- \nabla\tilde{\eta})\cdot\nabla\left(\frac{\eta^s-\eta^0}{s}\right)dx+
\\[0.2cm]&\,& \int_\Omega 2\left(\frac{\eta^s+\eta^0}{2}-\eta_d\circ T_s\right)\left(\frac{\eta^s-\eta^0}{s}\right)dx - s\int_\Omega\left\lvert\frac{\eta^s-\eta^0}{s}\right\rvert^2 dx 
+\\[0.2cm]&\,& \int_\Omega\nabla p^0\cdot\nabla\left(\frac{\eta^s-\eta^0}{s}\right)-k^2 p^0\left(\frac{\eta^s-\eta^0}{s}\right) dx=0.
\end{eqnarray*}
We can therefore rewrite the expression for $R(s)$ as follows
\begin{eqnarray}\label{R(s)1}
R(s)&=&\int_\Omega 2 \left(\frac{B(s)-I}{s}\right) \left\lbrace \nabla\left(\frac{\eta^s+\eta^0}{2}\right)-\nabla\tilde{\eta}^s   \right\rbrace\nabla\left(\eta^s-\eta^0\right)dx+s\int_\Omega\left\lvert\nabla\left(\frac{\eta^s-\eta^0}{s}\right)\right\rvert^2 dx
+\\&\,& \label{R(s)2}
\int_\Omega 2\left(\frac{J_s-1}{s}\right)\left(\frac{\eta^s+\eta^0}{2}-\eta_d\circ T_s\right)\left(\eta^s-\eta^0\right)dx-\int_\Omega 2(\nabla\tilde{\eta}^s- \nabla\tilde{\eta})\nabla\left(\frac{\eta^s-\eta^0}{s}\right)dx-\\[0.2cm]&\,& \label{R(s)3}
\int_\Omega 2\left(\eta_d\circ T_s-\eta_d\right)\left(\frac{\eta^s-\eta^0}{s}\right)dx + s\int_\Omega\left\lvert\frac{\eta^s-\eta^0}{s}\right\rvert^2 dx 
+\\[0.2cm]&\,& \label{R(s)4}
\int_\Omega \frac{B(s)-I}{s}\nabla p^0\nabla\left(\eta^s-\eta^0\right)-k^2\left(\frac{J_s-1}{s}\right) p^0\left(\eta^s-\eta^0\right) dx.
\end{eqnarray}
Thus the following inequality is verified

\begin{eqnarray*}
R(s)&\leq& 2 \left\lVert\frac{B(s)-I}{s}\right\rVert \left\lVert \nabla\left(\frac{\eta^s+\eta^0}{2}\right)-\nabla\tilde{\eta}^s   \right\rVert \left\lVert\nabla\left(\eta^s-\eta^0\right)\right\rVert +s\left\lVert\nabla\left(\frac{\eta^s-\eta^0}{s}\right)\right\rVert^2
+\\&\,&  2\left\lVert\frac{J_s-1}{s}\right\rVert\left\lVert\frac{\eta^s+\eta^0}{2}-\eta_d\circ T_s\right\rVert\left\lVert\eta^s-\eta^0\right\rVert+ 2\left\lVert\nabla\tilde{\eta}^s- \nabla\tilde{\eta}\right\rVert\left\lVert\nabla\left(\frac{\eta^s-\eta^0}{s}\right)\right\rVert+
\\[0.2cm]&\,&  2\left\lVert\frac{\eta_d\circ T_s-\eta_d}{s}\right\rVert\left\lVert\eta^s-\eta^0\right\rVert + s\left\lVert\frac{\eta^s-\eta^0}{s}\right\rVert^2  
+\\[0.2cm]&\;& \;\;\, \left\lVert\frac{B(s)-I}{s}\right\rVert\lVert\nabla p^0\rVert\left\lVert\nabla(\eta^s-\eta^0)\right\rVert+k^2\left\lVert\frac{J_s-1}{s}\right\rVert\left\lVert p^0\right\rVert\left\lVert\eta^s-\eta^0\right\rVert.
\end{eqnarray*}
We need to show that the limit of $R(s)$ exists and is zero. To do this, we first note that by the lemma \ref{DFdellemme}, the terms $(B(s)-I)/s$ and $(J(s)-1)/s$ are uniformly bounded. Thus it suffices only to show that $\eta^s\rightarrow\eta^0$ in $H^1_0(\Omega)$-strong and that the norm $H_0^1(\Omega)$ of $(\eta^s-\eta^0)/s$ is bounded. From (\ref{DFeqetas}) of $\eta^s$ and $\eta^0$, we see that for any $\phi'\in H^1_0(\Omega)$,
\begin{align*}
\int_{\Omega}B(s) \nabla\eta^s\nabla\phi' - k^2\eta^s\phi' J_s- f\circ T_s\phi' J_s dx=\int_{\Omega}\nabla\eta^0\nabla\phi' - k^2\eta^0\phi' - f\phi' dx
\end{align*}
\begin{align*}
\int_{\Omega} \nabla\eta^s\nabla\phi'dx+\int_{\Omega}[B(s)-I] \nabla\eta^s\nabla\phi' - k^2\eta^s\phi' J_s- f\circ T_s\phi' J_s dx=\int_{\Omega}\nabla\eta^0\nabla\phi' - k^2\eta^0\phi' - f\phi' dx
\end{align*}
\begin{align*}
\int_{\Omega} \nabla(\eta^s-\eta^0)\nabla\phi' - k^2(\eta^s J_s-\eta^0)\phi' dx=-\int_{\Omega}[B(s)-I] \nabla\eta^s\nabla\phi'dx+\int_{\Omega}
(f\circ T_s J_s - f)\phi' dx
\end{align*}
\begin{align*}
\int_{\Omega} \nabla(\eta^s-\eta^0)\nabla\phi' - k^2(\eta^s J_s-\eta^0J_s+\eta^0J_s-\eta^0)\phi' dx=-\int_{\Omega}[B(s)-I] \nabla\eta^s\nabla\phi'dx+\int_{\Omega}
(f\circ T_s J_s - f)\phi' dx
\end{align*}

\begin{eqnarray*}
\int_{\Omega} \nabla(\eta^s-\eta^0)\nabla\phi' - k^2(\eta^s-\eta^0)J_s\phi' dx&=&-\int_{\Omega}[B(s)-I] \nabla\eta^s\nabla\phi'+
f\circ T_s(J_s-1)\phi' dx \\ [0.2cm] &\;&+\int_{\Omega}(f\circ T_s - f)\phi'+\eta^0(J_s-1)\phi' dx.
\end{eqnarray*}
Taking $\phi'=\eta^s-\eta^0$, we get

\begin{eqnarray*}
\lVert\nabla(\eta^s-\eta^0)\rVert^2 - k^2J_s\lVert\eta^s-\eta^0\rVert^2&=&-\int_{\Omega}[B(s)-I] \nabla\eta^s\nabla(\eta^s-\eta^0)+f\circ T_s(J_s-1)(\eta^s-\eta^0)dx\\ [0.2cm] &\;&+\int_{\Omega}
(f\circ T_s - f)(\eta^s-\eta^0)+\eta^0(J_s-1)(\eta^s-\eta^0) dx.
\end{eqnarray*}
According to the lemma \ref{DFdellemme}, we have $\alpha\leq J_s\leq\beta$. Using the Cauchy Schwarz inequality

\begin{eqnarray*}
\lVert\nabla(\eta^s-\eta^0)\rVert^2 - k^2\beta\lVert\eta^s-\eta^0\rVert^2&\leq&\lVert B(s)-I\rVert \lVert\nabla\eta^s\rVert \lVert\nabla(\eta^s-\eta^0)\rVert \\[0.2cm]&\,&+\lVert f\circ T_s\rVert \lVert J_s-1\rVert \lVert\eta^s-\eta^0\rVert\\[0.2cm] &\;&+
\lVert f\circ T_s - f \rVert\lVert\eta^s-\eta^0\rVert+\lVert\eta^0\rVert \lVert J_s-1 \rVert\lVert\eta^s-\eta^0\rVert.
\end{eqnarray*}
Due to the fact that $\Omega$ is a bounded open lipschitzian domain, there exists a positive constant $c(\Omega)>0$ such that $\lVert\eta^s-\eta^0\rVert \leq c(\Omega)\lVert\nabla(\eta^s-\eta^0)\rVert$. This will allow us to write

\begin{eqnarray*}
\left(1- k^2c(\Omega)\beta\right)\lVert\nabla(\eta^s-\eta^0)\rVert^2&\leq&\lVert B(s)-I\rVert \lVert\nabla\eta^s\rVert \lVert\nabla(\eta^s-\eta^0)\rVert \\[0.2cm]&\,&+\lVert f\circ T_s\rVert \lVert J_s-1\rVert \lVert\nabla(\eta^s-\eta^0)\rVert c(\Omega)\\[0.3cm] &\;&+
\lVert f\circ T_s - f \rVert\lVert\nabla(\eta^s-\eta^0)\rVert c(\Omega)\\[0.2cm]&\,&+\lVert\eta^0\rVert \lVert J_s-1 \rVert\lVert\nabla(\eta^s-\eta^0)\rVert c(\Omega).
\end{eqnarray*}
So we have
\begin{eqnarray*}
\left(1- k^2c(\Omega)\beta\right)\lVert\nabla(\eta^s-\eta^0)\rVert&\leq&\lVert B(s)-I\rVert \lVert\nabla\eta^s\rVert +\lVert f\circ T_s\rVert \lVert J_s-1\rVert c(\Omega)\\[0.3cm] &\;&+
\lVert f\circ T_s - f \rVert c(\Omega)+\lVert\eta^0\rVert \lVert J_s-1 \rVert c(\Omega).
\end{eqnarray*}
The right hand side of this inequality tends to zero when $s$ goes to zero. But this does not mean directly that $\lVert\nabla(\eta^s-\eta^0)\rVert$ goes to zero. In fact it will depend on the sign of $1- k^2c(\Omega)\beta$. If $1- k^2c(\Omega)\beta$ is positive, then $\lVert\nabla(\eta^s-\eta^0)\rVert$ goes to zero. On the other hand if $1- k^2c(\Omega)\beta$ is negative, we can always find a positive constant $M$ such that $M+1- k^2c(\Omega)\beta>0$ and such that we have
\begin{eqnarray*}
\left(M+1- k^2c(\Omega)\beta\right)\lVert\nabla(\eta^s-\eta^0)\rVert&\leq&\lVert B(s)-I\rVert \lVert\nabla\eta^s\rVert +\lVert f\circ T_s\rVert \lVert J_s-1\rVert c(\Omega)\\[0.3cm] &\;&+
\lVert f\circ T_s - f \rVert c(\Omega)+\lVert\eta^0\rVert \lVert J_s-1 \rVert c(\Omega).
\end{eqnarray*}
So we can conclude that $\eta^s\rightarrow\eta^0$ in $H^1_0(\Omega)$-strong. To show that $(\nabla(\eta^s-\eta^0))/s$ is bounded, we divide the equation by $s>0$
\begin{eqnarray*}
\left(1- k^2c(\Omega)\beta\right)\left\lVert\frac{\nabla(\eta^s-\eta^0)}{s}\right\rVert&\leq&\left\lVert \frac{B(s)-I}{s}\right\rVert \lVert\nabla\eta^s\rVert +\left\lVert \frac{J_s-1}{s}\right\rVert\lVert f\circ T_s\rVert c(\Omega)\\[0.3cm] &\;&+
\left\lVert \frac{f\circ T_s - f }{s}\right\rVert c(\Omega)+\left\lVert \frac{J_s-1}{s}\right\rVert\lVert\eta^0\rVert  c(\Omega).
\end{eqnarray*}
The term on the right is bounded, by playing with the sign of $1- k^2c(\Omega)\beta$, we show that $(\nabla(\eta^s-\eta^0))/s$ is bounded. This means that $(\eta^s-\eta^0)/s$ is bounded in $H^1_0(\Omega)$ and therefore $(\eta^s-\eta^0)/s$ is bounded in $L^2(\Omega)$.

Then $R(s)\rightarrow 0$ when $s\rightarrow0,$ i.e. the term $R(x^0,p^0)$ is zero and the shape derivative is given by $d_sL(0,\eta^0,p^0).$ 

\end{proof}

\noindent
The use of the averaged adjoint revealed the possible occurrence of an extra term and provided a simpler expression of the former hypothesis \textbf{(H3)}. It turns out that the extra term can also be obtained by using the standard adjoint at t = 0 significantly
simplifying the checking of that condition.
\begin{corollary}
Consider the Lagrangian functional
\begin{eqnarray*}
(s,x,y)\mapsto G(s,x,y): [0,\tau]\times X\times Y\rightarrow\mathbb{R},\;\;\tau>0
\end{eqnarray*}
where $X$ and $Y$ are vector spaces and the function $y\mapsto G(s,x,y)$ is affine. Assume that $\textbf{H(0)}$ and the following assumptions are satisfied:\\
$\textbf{H(1)}$ for all $s\in [0,\tau]$, $X(s)\neq\emptyset$,  $g(s)$ is finite, and for each $x\in X(0)$, $Y(0,x)\neq\emptyset$,\\
$\textbf{H(2)}$ for all $x\in X(0)$ and $p\in Y(0,x)$ $d_sG(0,x,p)$  exists,\\
$\textbf{H(3'')}$ there exist $x^0\in X(0)$ and $p^0\in Y(0,x^0)$ such that the following limit exists
\begin{eqnarray*}
R(x^0,p^0)=\lim_{s\searrow 0}\int_0^1 d_xG\left( s, x^0+\theta(x^s-x^0),p^0;\frac{x^s-x^0}{s}\right) d\theta.
\end{eqnarray*}
Then, $dg(0)$ exists and there exist $x^0\in X(0)$ and $p^0\in Y(0,x^0)$ such that $dg(0)=d_sG(0,x^0,p^0)+R(x^0,p^0)$.
\end{corollary}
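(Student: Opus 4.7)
The plan is to replay the argument of the main theorem with the uniqueness hypothesis weakened to mere nonemptiness, being careful to use precisely the $x^0$ and $p^0$ whose existence is postulated in \textbf{H(3'')}. First I would use \textbf{H(1)} together with the Constrained infimum and minimax lemma to pick any selection $x^s\in X(s)\subset E(s)$ for each $s\in[0,\tau]$, and simultaneously fix the distinguished pair $x^0\in X(0)$, $p^0\in Y(0,x^0)$ furnished by \textbf{H(3'')}. Because $G(s,x,\cdot)$ is affine and $x^s\in E(s)$, the state equation $d_yG(s,x^s,0;p^0)=0$ gives
\begin{equation*}
g(s)=G(s,x^s,0)=G(s,x^s,0)+d_yG(s,x^s,0;p^0)=G(s,x^s,p^0),
\end{equation*}
and the same identity at $s=0$ yields $g(0)=G(0,x^0,p^0)$. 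So the reference value $p^0$ can be inserted on both sides free of charge.

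Next I would write the telescoping decomposition
\begin{equation*}
\frac{g(s)-g(0)}{s}=\frac{G(s,x^s,p^0)-G(s,x^0,p^0)}{s}+\frac{G(s,x^0,p^0)-G(0,x^0,p^0)}{s}.
\end{equation*}
The second summand converges to $d_sG(0,x^0,p^0)$ by \textbf{H(2)}. For the first, \textbf{H(0)(i)} justifies the fundamental-theorem-of-calculus representation
\begin{equation*}
\frac{G(s,x^s,p^0)-G(s,x^0,p^0)}{s}=\int_0^1 d_xG\!\left(s,x^0+\theta(x^s-x^0),p^0;\tfrac{x^s-x^0}{s}\right)d\theta,
\end{equation*}
where the integrability of the integrand is guaranteed by \textbf{H(0)(ii)}. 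Finally \textbf{H(3'')} says the limit of the right-hand side as $s\searrow 0$ exists and equals $R(x^0,p^0)$.

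Adding the two limits gives $dg(0)=d_sG(0,x^0,p^0)+R(x^0,p^0)$ for this particular pair $(x^0,p^0)$, which is exactly the conclusion of the corollary. The step that requires the most care is verifying that the choice $y=p^0$ is admissible on \emph{both} endpoints of the telescoping: at $s>0$ one is tempted to use an adjoint $p^s$ attached to $x^s$, but here we deliberately freeze $y=p^0$ on the $s$-side and let the affine-in-$y$ structure plus $x^s\in E(s)$ absorb the discrepancy; without the lemma's chain $X(s)\subset E(s)$ this identification would fail. Once this is in place, the weakening of \textbf{H(1)} from singletons to nonempty sets is inconsequential, because the argument never uses uniqueness of $x^s$ or $p^0$—only the existence of one admissible pair along which the limit in \textbf{H(3'')} is known to exist.
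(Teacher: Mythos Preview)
Your proposal is correct and follows essentially the same route as the paper: the paper does not give a separate proof for the corollary, so the intended argument is exactly the one from the preceding theorem---write $g(s)-g(0)=G(s,x^s,p^0)-G(s,x^0,p^0)+G(s,x^0,p^0)-G(0,x^0,p^0)$, apply \textbf{H(0)} to the first block and \textbf{H(2)}, \textbf{H(3'')} to pass to the limit. Your write-up is in fact more careful than the paper's, since you make explicit why the identity $g(s)=G(s,x^s,p^0)$ holds (via $X(s)\subset E(s)$ and the affine dependence on $y$) and why singletons in \textbf{H(1)} can be relaxed to nonemptiness without affecting the argument.
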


\section{Topological derivative with geometrical perturbation of the domain }
Consider the functional defined by
\begin{eqnarray}\label{fye}
J(\Omega)=\int_\Omega\lvert\nabla\eta-A\rvert^2dx+\int_\Omega\lvert\eta-\eta_d\rvert^2dx
\end{eqnarray}
where $\Omega$ is a domain of $\mathbb{R}^N$, $N=2,3$ with a lipschitzian $\partial\Omega$ boundary and $\eta\in H^1_0(\Omega)$ is a solution of the variational equation of state
\begin{eqnarray}\label{fvdedepart}
\exists\;\eta\in H^1_0(\Omega)\;\text{such that} \int_\Omega\nabla\eta\nabla v-k^2\eta vdx=\int_\Omega fvdx,\;\text{for all}\;v\in H^1_0(\Omega).
\end{eqnarray} 
Let $x_0\in\Omega\subset\mathbb{R}^N$, $E$ be a set $d$-rectifiable with $0\leq d < N$ and $E_r$ the $r$-dilatation of $E$. We recall that 
\begin{eqnarray*}
d_E(x)=\inf_{x_0\in E}\lvert x-x_0\rvert\;\;\text{and}\;\;E_r=\{x\in\mathbb{R}^N:\;d_E(x)\leq r\}.
\end{eqnarray*}
We define the auxiliary variable $s$ as the volume of the $r$-dilatation $E_r$. The perturbed domains are this time defined by $r\rightarrow\Omega_r=\Omega\backslash E_r$.
\begin{small}
$s$ is the volume of $E_r$. It is also given by $s=\alpha_{N-d}r^{N-d}$, where $\alpha_{N-d}$ is the volume of the ball of radius 1 in $\mathbb{R}^{N-d}$. Therefore the perturbation takes the form $\textbf{1}_{\Omega_r}=\textbf{1}_{\Omega}-\textbf{1}_{E_r}$.
We define the following sets 
\begin{eqnarray*}
E_r^o=\{x\in\mathbb{R}^N:\;\;d_E(x)<r \}\;\;\text{and}\;\;\;E_r'=E_r^o\backslash E =\{x\in\mathbb{R}^N:\;\; 0<d_E(x)<r \}.
\end{eqnarray*}
\begin{definition}
Let $E$ be a closed subset of $\mathbb{R}^N$.\\
i) The set of points of $\mathbb{R}^N$ which have a unique projection on $E$.
\begin{eqnarray*}
Unp(E)=\{y\in\mathbb{R}^N:\;\exists\;\text{an unique}\;x\in E\;\text{such that}\;d_E(y)=\lVert y-x\rVert \}
\end{eqnarray*} 
where $d_E(y)$ is the distance function from a point $y$ to $E$.\\
ii) The reach of a point $x\in E$ and the reach of $E$ are defined
\begin{eqnarray*}
reach(E,x)=\sup\{r>0:\;B(x,r)\subset Unp(E) \},\;\;reach(E)=\inf_{x\in E}reach(E,x).
\end{eqnarray*}
We say that $E$ is a positive reach set if $reach(E)>0$.
\end{definition}
\end{small}
The definition of $Unp(E)$ obviously implies the existence of a projection on $E$, a function $p_E:Unp(E)\rightarrow E$ which associates $x\in Unp(E)$ the unique point $p_E(x)\in E$ such that $d_E(x)=\lVert x-p_E(x)\rVert$. For $0< R< reach(E)$, the projection $p_E$ is equal to the gradient of the following convex function
\begin{eqnarray*}
f_E(x)=\frac{1}{2}\left(\lVert x\rVert^2-d_E^2(x)\right),\;\;p_E(x)=\nabla f_E(x)=x-\frac{1}{2}d_E^2(x).
\end{eqnarray*} 
\textbf{Hypothesis 1}\\ Let $E$ be a closed connected subset of $\mathbb{R}^N$ and $R>0$ such that $E_{2R}=\{x\in\mathbb{R}^N;\;d_E(x)\leq 2R \} \subset\Omega$. Suppose that $E$ has a positive reach greater than $2R$ $(reach(E)> 2R)$ and that $0< H^d(E)<\infty$ for an integer $d$, $0\leq d< N$.\\
 We also need the following theorem which extends the Lebesgue differentiation theorem from $d=0$ to $0\leq d< N$.
   \begin{theorem}
   Let $E$ be a compact subset of $\mathbb{R}^N$ and $0\leq d< N$ an integer. Let $\alpha_k$ denote the volume of the unit ball in $\mathbb{R}^k$. Suppose that
   \begin{itemize}
   \item[(1)] $E$ is a $d$-rectifiable subset of $\mathbb{R}^N$ such that $\partial E=E$ and $0< H^d(E)<\infty$,
   \item[(2)] $E$ has a positive reach, i.e. there is $R>0$ such that $d^2_E\in C^{1,1}(E_R)$
   \item[(3)] and $f$  is continuous in $E_R$. Then
   \begin{eqnarray*}
   \lim_{r\searrow 0}\frac{1}{\alpha_{N-d}r^{N-d}}\int_{E_r}fd\xi=\lim_{r\searrow 0}\frac{1}{\alpha_{N-d}r^{N-d}}\int_{E_r}fop_E d\xi=\int_E fdH^d.
   \end{eqnarray*}
   \end{itemize}
   \end{theorem}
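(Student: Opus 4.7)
The plan is to exploit the regular tubular neighbourhood structure that positive reach provides. Since $\operatorname{reach}(E)>2R$, the projection $p_E$ is single-valued and Lipschitz on $E_R$, each fibre $p_E^{-1}(x)\cap E_r$ sits inside the affine normal space $x+N_xE$ of codimension $d$, and $E_r$ decomposes (up to an $H^d$-null set) as a disjoint union of $(N-d)$-dimensional disks of radius $r$, one per point of $E$. This structural fact is the key consequence of the hypothesis $d_E^2\in C^{1,1}(E_R)$ and will be used throughout.

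I would first treat the second equality, the one involving $f\circ p_E$, which is the easier of the two. By Federer's coarea/tube formula for sets of positive reach, there exists a Jacobian factor $J_E(x,v)\geq 0$ such that
\[
\int_{E_r} g(\xi)\,d\xi = \int_E \int_{N_xE \,\cap\, B(0,r)} g(x+v)\, J_E(x,v)\,dv\,dH^d(x),
\]
with $J_E(x,0)=1$ and, thanks to the $C^{1,1}$ regularity of $d_E^2$, $J_E(x,v)\to 1$ uniformly in $x\in E$ as $|v|\searrow 0$. Substituting $g=f\circ p_E$ reduces the inner integrand to $f(x)J_E(x,v)$, and the inner integral becomes $\alpha_{N-d}r^{N-d}f(x)(1+o(1))$ uniformly in $x$. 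Dividing by $\alpha_{N-d}r^{N-d}$ and letting $r\searrow 0$ yields $\int_E f\,dH^d$ by dominated convergence, using compactness of $E$ and continuity of $f$ to dominate.

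For the first equality I would simply compare the two integrands on $E_r$. Since $f$ is continuous on the compact set $E_R$, it is uniformly continuous there with some modulus $\omega_f$; and by construction $|\xi-p_E(\xi)|\leq r$ for every $\xi\in E_r$. Hence
\[
\left| \int_{E_r} \bigl(f(\xi)-f(p_E(\xi))\bigr)\,d\xi \right| \leq \omega_f(r)\,|E_r|,
\]
and the same tube formula (applied with $g\equiv 1$) gives $|E_r|/(\alpha_{N-d}r^{N-d})\to H^d(E)<\infty$. Dividing by $\alpha_{N-d}r^{N-d}$ shows the difference tends to $0$, so the two limits coincide and both equal $\int_E f\,dH^d$.

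The main obstacle is justifying the coarea decomposition with a Jacobian that converges uniformly to $1$. This is Federer's classical tube formula for sets of positive reach; under the $C^{1,1}$ hypothesis on $d_E^2$ it ultimately controls the principal curvatures of the level sets of $d_E$ inside $E_R$, which produces the uniform estimate $J_E(x,v)=1+O(|v|)$ used above. Once this geometric-measure-theoretic input is invoked, the remainder of the argument is only a uniform-continuity estimate together with dominated convergence, both of which are routine.
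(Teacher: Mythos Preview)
The paper does not supply a proof of this theorem; it is introduced as a background tool (``We also need the following theorem which extends the Lebesgue differentiation theorem\ldots'') and then simply used in the computations of $d_sL(0,\varphi,\Phi)$ that follow. There is therefore no paper proof to compare against.

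On its own merits, your outline is the standard route and is sound. Step~(b)---compare $f$ with $f\circ p_E$ via uniform continuity and the volume asymptotics $|E_r|\sim \alpha_{N-d}r^{N-d}H^d(E)$---is unproblematic. Step~(a)---disintegrate $\int_{E_r} f\circ p_E$ along the normal fibres---is exactly Federer's tube formula for sets of positive reach (Curvature Measures, Trans.\ AMS 1959): the map $(x,v)\mapsto x+v$ is a bi-Lipschitz bijection from the unit normal bundle, restricted to $|v|<R$, onto $E_R\setminus E$, and its Jacobian is a polynomial in $v$ whose coefficients are the generalised principal curvatures, bounded by $1/\operatorname{reach}(E)$. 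This gives the uniform estimate $J_E(x,v)=1+O(|v|)$ you need. The hypotheses $d$-rectifiable and $0<H^d(E)<\infty$ ensure that at $H^d$-a.e.\ $x$ the tangent cone is a $d$-plane and the normal cone its orthogonal complement, so the inner integral really is over an $(N-d)$-disk of radius $r$. Since $\partial E=E$ forces $\mathcal{L}^N(E)=0$, removing $E$ from $E_r$ is harmless and the disintegration accounts for the full integral.
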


   \subsection{Topological derivative using the Lagrangian and minimax methods with perturbation on the source term }
We define the auxiliary variable $s$ as the volume of the $r$-dilation $E_r$. The perturbed domains are then defined by
\begin{eqnarray*}
s\rightarrow\Omega_s=\Omega\backslash E_r
\end{eqnarray*}
$s$ is the volume of $E_r$. It is also given by $s=\alpha_{N-d}r^{N-d}$, where $\alpha_{N-d}$ is the volume of the ball of radius 1 in $\mathbb{R}^{N-d}$. Therefore the perturbation takes the form $\mathds{1}_{\Omega_s}=\mathds{1}_{\Omega}-\mathds{1}_{E_r}$.
we introduce a topological perturbation of the second member $f$, by posing for example
    \begin{equation*}
    f^s=
    \begin{cases}
    f(x)\;\;if\;\;x\in\Omega\backslash E_r\\[0.2cm] \gamma f(x)\;\;if\;\;x\in E_r
    \end{cases}
    \end{equation*}
    $f$ is a function assumed to be continuous at the point $x_0$ and that $f(x)$ is constant in a neighbourhood of $x_0$.
     \begin{eqnarray*}
    \int_{\Omega}\nabla\eta^s\nabla vdx-\int_{\Omega}k^2\eta^s vdx&=&\int_{\Omega}f^s vdx\\&=&\int_{\Omega\backslash E_r}f^s vdx +\int_{E_r}f^s vdx  =\int_{\Omega\backslash E_r}f vdx+\gamma \int_{E_r}f vdx\\ &=&\int_{\Omega}f vdx-\int_{E_r}f vdx+\gamma \int_{E_r}f vdx\\&=&\int_{\Omega}f vdx-(1-\gamma)\int_{E_r}f vdx=\int_{\Omega}[f-(1-\gamma)\chi_{E_r} f]v dx.
    \end{eqnarray*}
   This leads to
    \begin{eqnarray}\label{DelHelper}
    \int_{\Omega}\nabla\eta^s\nabla vdx-\int_{\Omega}k^2\eta^s vdx&=&\int_{\Omega}[f-(1-\gamma)\chi_{E_r} f]v dx.
   \end{eqnarray}
We can therefore define the Lagrangian depending on $s$ as
\begin{small}
 \begin{eqnarray*}
 L(s,\varphi,\Phi)=\int_\Omega\lvert\nabla\varphi-A\rvert^2dx+\int_\Omega\lvert\varphi-\eta_d\rvert^2dx+\int_{\Omega}\nabla\varphi\nabla \Phi dx-\int_{\Omega}k^2\varphi\Phi dx-\int_{\Omega}[f-(1-\gamma)\chi_{E_r} f]\Phi dx.
 \end{eqnarray*}
 \end{small}
 The derivative of the Lagrangian depending on $s$ with respect to $\varphi$ is:
 \begin{eqnarray*}
 d_{\varphi} L(s,\varphi,\Phi;\varphi')=\int_\Omega 2(\nabla\varphi-A)\nabla\varphi'+2(\varphi-\eta_d)\varphi'+\int_{\Omega}\nabla\varphi'\nabla \Phi dx-\int_{\Omega}k^2\varphi'\Phi dx.
  \end{eqnarray*}
  The initial adjoint state $p^0$ is a solution of $d_{\varphi} L(s,\eta^0,p^0;\varphi')=0$ for all $\varphi'\in H^1_0(\Omega)$ with $\eta^0=\eta=\eta^s$ for $s=0$. So the variational formulation of the adjoint equation of state is given by: Find $p^0\in H^1_0(\Omega)$ such that
  \begin{eqnarray}\label{adjdel}
   \int_\Omega 2(\nabla \eta^0-A)\nabla\varphi'+2(\eta^0-\eta_d)\varphi'+\int_{\Omega}\nabla\varphi'\nabla p^0 dx-\int_{\Omega}k^2\varphi'p^0 dx=0.
    \end{eqnarray}
The derivative of the Lagrangian depending on $s$ with respect to $\Phi$ is:
   \begin{eqnarray*}
   d_{\Phi} L(s,\varphi,\Phi;\Phi')=\int_{\Omega}\nabla\varphi\nabla \Phi' dx-\int_{\Omega}k^2\varphi\Phi' dx-\int_{\Omega}[f-(1-\gamma)\chi_{E_r} f]\Phi' dx.
    \end{eqnarray*}
  The initial state $\eta=\eta^0$ is solution of $d_{\Phi} L(0,\eta^0,0;\Phi')=0$ for all $\Phi'\in H^1_0(\Omega)$ and we find obviously
 \begin{eqnarray*}
   \int_{\Omega}\nabla\eta^0\nabla \Phi' dx-\int_{\Omega}k^2\eta^0\Phi dx=\int_{\Omega}f\Phi' dx\;\;\text{pour tout}\; \Phi'\in H^1_0(\Omega)
 \end{eqnarray*}
 and the state $\eta^s$ for $s\geq 0$ verifies
  \begin{eqnarray}\label{etadel}
  \int_{\Omega}\nabla\eta^s\nabla \Phi' dx-\int_{\Omega}k^2\eta^s\Phi' dx=\int_{\Omega}[f-(1-\gamma)\chi_{E_r} f]\Phi' dx\;\;\text{pour tout}\; \Phi'\in H^1_0(\Omega).
  \end{eqnarray}
  Now we want to calculate the derivative of the Lagrangian depending on $s$ with respect to $s$. To do this we first calculate the ratio $\frac{L(s,\varphi,\Phi)-L(0,\varphi,\Phi)}{s}$
   \begin{eqnarray*}
   \frac{L(s,\varphi,\Phi)-L(0,\varphi,\Phi)}{s}&=&\frac{1-\gamma}{s}\int_{E_r}f(x)\Phi(x)dx\\&=&\frac{1-\gamma}{\vert E_r\rvert}\int_{E_r}f(x)\Phi(x)dx.
   \end{eqnarray*} 
   For $d=0$, $E=\{x_0\}$, $E_r=\{x\in\mathbb{R}^N:\;\lvert x-x_0\rvert\leq r\}=\overline{B(x_0,r)}$; the closed ball of centre $x_0$ and radius $r$. So by applying Lebesgue's differentiation theorem we get $d_sL(0,\varphi,\Phi)=(1-\gamma)f(x_0)\Phi(x_0)$. So at the point $\eta^0$ and $p^0$
     \begin{eqnarray}\label{delfourdt}
     d_sL(0,\eta^0,p^0)=(1-\gamma)f(x_0)p^0(x_0).
     \end{eqnarray}
In cases where $d=1,2$, we have
 \begin{eqnarray*}
  \frac{L(s,\varphi,\Phi)-L(0,\varphi,\Phi)}{s}&=&\frac{1-\gamma}{\vert E_r\rvert}\int_{E_r}f(x)\Phi(x)dx\\&=&
  \frac{1-\gamma}{\alpha_{N-d}r^{N-d}}\int_{E_r}f(x)\Phi(x)dx\rightarrow (1-\gamma)\int_{E}f(x)\Phi(x)dH^d.
  \end{eqnarray*} 
Therfore at the point $\eta^0$ et $p^0$  
  \begin{eqnarray}\label{delfourdt1}
     d_sL(0,\eta^0,p^0)=(1-\gamma)\int_{E}f(x)\Phi(x)dH^d.
     \end{eqnarray}
We now need to calculate $R(\eta^0,p^0)$ which is defined by
\begin{eqnarray}
R(\eta^0,p^0)=\lim_{s\searrow 0}\int_0^1 d_xL\left(s,\eta^0+\theta(\eta^s-\eta^0),p^0;\frac{\eta^s-\eta^0}{s}\right)=\lim_{s\searrow 0}\;R(s)
\end{eqnarray} 
\begin{eqnarray*}
 R(s)&=&\int_\Omega 2\left\lbrace \nabla\left(\frac{\eta^s+\eta^0}{2}\right)-A\right\rbrace\nabla\left(\frac{\eta^s-\eta^0}{s}\right)
 +2\left(\frac{\eta^s+\eta^0}{2}-\eta_d\right)\left(\frac{\eta^s-\eta^0}{s}\right)dx\\ [0.2cm]&+&\int_\Omega\nabla\left(\frac{\eta^s-\eta^0}{s}\right)\nabla p^0-k^2\left(\frac{\eta^s-\eta^0}{s}\right)p^0dx.
 \end{eqnarray*} 
 Taking $\varphi'=\frac{\eta^s-\eta^0}{s}$ in the adjoint equation of state verified by $p^0$ (\ref{adjdel}), we obtain
 \begin{eqnarray*}
&\;& \int_\Omega 2(\nabla \eta^0-A)\nabla\left(\frac{\eta^s-\eta^0}{s}\right)+2(\eta^0-\eta_d)\left(\frac{\eta^s-\eta^0}{s}\right)dx+\\&\;&\int_{\Omega}\nabla\left(\frac{\eta^s-\eta^0}{s}\right)\nabla p^0 dx-\int_{\Omega}k^2\left(\frac{\eta^s-\eta^0}{s}\right)p^0 dx=0.
   \end{eqnarray*}
So injecting this equality into the expression of $R(s)$ gives
  \begin{eqnarray*}
   R(s)&=&\int_\Omega\nabla\left(\eta^s-\eta^0\right)\nabla\left(\frac{\eta^s-\eta^0}{s}\right)
   +\left(\eta^s-\eta^0\right)\left(\frac{\eta^s-\eta^0}{s}\right)\\&=&\int_\Omega\left\lvert \nabla\left(\frac{\eta^s-\eta^0}{s^{1/2}}\right)\right\rvert^2dx+\int_\Omega\left\lvert \frac{\eta^s-\eta^0}{s^{1/2}}\right\rvert^2dx =\left\lVert \frac{\eta^s-\eta^0}{s^{1/2}}\right\rVert^2_{H^1(\Omega)} 
   \end{eqnarray*} 
To show that $R(s)$ tends to $0$, we can show that the norm of $\eta^s-\eta^0$ in $H^1(\Omega)$ is an $O(s)$. In this case we need the following lemma
\begin{lemma}
Let $\eta^0$ and $\eta^s$ be the solutions of  (\ref{fvdedepart}) and (\ref{DelHelper}) respectively. Then, we have the following estimate:
 \begin{eqnarray*}
    \lVert \eta^s-\eta^0 \rVert_{H^1(\Omega)}\leq Cs
 \end{eqnarray*}
    with $C$ a positive constant not depending of $s.$
\end{lemma}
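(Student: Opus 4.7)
The plan is to derive an energy estimate for the difference $w^s := \eta^s - \eta^0$ directly from the weak formulations. Subtracting equation \eqref{fvdedepart} (the state at $s=0$) from equation \eqref{etadel} (the perturbed state), I obtain for every $v \in H^1_0(\Omega)$
\begin{equation*}
\int_{\Omega} \nabla w^s \cdot \nabla v \, dx - k^2 \int_{\Omega} w^s v \, dx = -(1-\gamma)\int_{E_r} f v \, dx .
\end{equation*}
Since $w^s \in H^1_0(\Omega)$, I will take $v = w^s$ as test function. This mirrors the energy argument already used earlier in the proof of the shape derivative theorem for the quantity $R(s)$.

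Next I would bound the right hand side by Hölder's inequality, using the continuity of $f$ at $x_0$ (hence $f \in L^\infty$ on a neighborhood that eventually contains $E_r$ for $r$ small enough) together with the Sobolev embedding $H^1_0(\Omega) \hookrightarrow L^p(\Omega)$ valid for $N=2,3$ and suitable $p$. Combined with $|E_r| = s$, this gives an estimate of the form
\begin{equation*}
\left| (1-\gamma)\int_{E_r} f w^s \, dx \right| \leq C\, \|f\|_{L^\infty} \, s^{\alpha} \, \|w^s\|_{H^1_0(\Omega)},
\end{equation*}
where the exponent $\alpha$ is obtained from Hölder with Sobolev. On the left hand side, I would control the $-k^2\|w^s\|_{L^2}^2$ term by the Poincaré inequality $\|w^s\|_{L^2} \leq c(\Omega) \|\nabla w^s\|_{L^2}$, exactly as was done at the end of the proof of the shape derivative theorem to handle the sign of $1 - k^2 c(\Omega)\beta$. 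Under the non-resonance assumption on $k^2$ (or by adding a uniform absorption constant $M$ as in that earlier argument), the bilinear form is coercive, so we recover a full $H^1_0$ estimate by dividing once through by $\|w^s\|_{H^1_0}$.

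The main obstacle is to squeeze out the tight order $Cs$ stated in the lemma, rather than the naive $Cs^{1/2}$ one gets from Hölder with $p=2$ in the step above. To reach $Cs$ I would instead use the $L^1$-smallness of the right hand side, namely $\|f\chi_{E_r}\|_{L^1(\Omega)} \leq \|f\|_{L^\infty} |E_r| = \|f\|_{L^\infty} s$, and then exploit the Sobolev embedding into $L^p(\Omega)$ with $p$ as large as possible ($p < \infty$ in dimension $N=2$, $p=6$ in dimension $N=3$), so that the dual exponent $p'$ is close to $1$ and $|E_r|^{1/p'}$ is close to $s$. This is the most delicate point and, depending on $N$, may require a duality argument of Aubin--Nitsche type: introducing an auxiliary solution $\zeta \in H^1_0(\Omega)$ of the adjoint Helmholtz problem with right hand side $w^s$, testing with $\zeta$, and using the higher regularity of $\zeta$ (namely $\zeta \in H^2$, hence continuous for $N \leq 3$) to replace $\|w^s\|_{L^\infty(E_r)}$ by a term controlled by $\|w^s\|_{L^2}$.

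Finally, the proof concludes by combining the coercivity lower bound with the refined upper bound on the right hand side, yielding $\|w^s\|_{H^1_0(\Omega)} \leq C s$ with $C$ depending only on $\Omega$, $k$, $\|f\|_{L^\infty}$, and $|1-\gamma|$, but not on $s$. This is exactly the bound the previous proof used to conclude $R(s) = \|w^s/s^{1/2}\|_{H^1}^2 \leq C^2 s \to 0$ as $s \searrow 0$.
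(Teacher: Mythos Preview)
Your approach is essentially the paper's: subtract the two weak formulations, test with $w^s=\eta^s-\eta^0$, bound the right-hand side, and absorb the $-k^2\lVert w^s\rVert_{L^2}^2$ term via Poincar\'e (the paper even uses the same ad hoc device of adding a constant $M$ when the coercivity coefficient is nonpositive). The only difference is that the paper applies plain Cauchy--Schwarz on $E_r$ and writes $\lvert 1-\gamma\rvert\,\lVert f\rVert_{L^2(E_r)}\lVert w^s\rVert_{L^2(E_r)}\le C_1 s\,\lVert w^s\rVert_{L^2(E_r)}$ without further comment, whereas you correctly observe that $\lVert f\rVert_{L^2(E_r)}$ is only of order $s^{1/2}$ for bounded $f$ and propose Sobolev embeddings or an Aubin--Nitsche duality to sharpen the exponent.

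One caution on your refinement: in dimension $N=3$ neither the $L^p$ embedding (which yields $s^{1-1/p}$ with $p\le 6$, hence at best $s^{5/6}$) nor the duality argument recovers the full $H^1$ rate $Cs$; a direct computation with the Newtonian potential of $\chi_{B_r}$ shows the sharp order of $\lVert\nabla w^s\rVert_{L^2}$ is indeed $s^{5/6}$. So the lemma as stated is optimistic, and the paper's claimed bound $C_1 s$ at the Cauchy--Schwarz step glosses over exactly the point you flag. For the intended application, however, any exponent strictly above $1/2$ already gives $R(s)=\lVert w^s/s^{1/2}\rVert_{H^1}^2\to 0$, so your more careful argument is sufficient for the purpose.
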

\begin{proof}
 Let $a(\eta,v)$ be the bilinear formula defined by
      \begin{eqnarray*}
      a(\eta,v)=\int_\Omega\nabla\eta \nabla vdx-k^2\int_\Omega\eta vdx
      \end{eqnarray*}
       it is clear that there are two constants $\alpha\in\mathbb{R}$ and $\beta >0$ such that 
            \begin{eqnarray}
            \lvert a(\eta,\eta)\rvert\geq \beta\lVert\eta\rVert^2_{H^1_0(\Omega)}-\alpha \lVert \eta\rVert^2_{L^2(\Omega)}
            \end{eqnarray}
   indeed, we have
         \begin{eqnarray*}
          a(\eta,v)&=&\int_{\Omega}\lvert\nabla\eta\rvert^2 dx-\int_{\Omega}k^2\lvert\eta\rvert^2 dx=\lVert\nabla\eta\rVert^2_{L^2(\Omega)}-k^2 \lVert \eta\rVert^2_{L^2(\Omega)}\\&=&\lVert\eta\rVert^2_{H^1_0(\Omega)}-k^2 \lVert \eta\rVert^2_{L^2(\Omega)}\\&\geq&\beta\lVert\eta\rVert^2_{H^1_0(\Omega)}-k^2 \lVert \eta\rVert^2_{L^2(\Omega)}.
          \end{eqnarray*}
    Therefore $a(\eta,v)$ is $H^1_0(\Omega)$ elliptical for $\alpha=k^2$. Subtracting the two problems:
        \begin{eqnarray*}
        \int_{\Omega}\nabla(\eta^s-\eta^0)\nabla vdx-\int_{\Omega}k^2(\eta^s-\eta^0) vdx&=&\int_{\Omega}f^s vdx-\int_{\Omega}f vdx \\ &=&\int_{\Omega\backslash E_r}f vdx+\gamma \int_{E_r}f vdx-\int_{\Omega}fvdx\\&=&\int_{\Omega}f vdx-(1-\gamma)\int_{E_r}f vdx-\int_{\Omega}f v\\&=&-(1-\gamma)\int_{E_r}f vdx.
        \end{eqnarray*}
        This leads to
        \begin{eqnarray}
        \int_{\Omega}\nabla(\eta^s-\eta^0)\nabla vdx-\int_{\Omega}k^2(\eta^s-\eta^0) vdx=-(1-\gamma)\int_{E_r}f vdx.
        \end{eqnarray}
        Since $\eta^s-\eta^0\in H^1_0(\Omega)$, we can take $v=\eta^s-\eta^0$. This gives
        \begin{eqnarray*}
        \int_{\Omega}\lvert\nabla(\eta^s-\eta^0)\rvert^2dx-\int_{\Omega}k^2(\eta^s-\eta^0)\rvert^2 dx=-(1-\gamma)\int_{E_r}f(\eta^s-\eta^0) dx.
        \end{eqnarray*}
        Using the inequality verified by $a(\eta,\eta)$
        \begin{eqnarray*}
           \beta\lVert\eta^s-\eta^0\rVert^2_{H^1_0(\Omega)}-k^2 \lVert\eta^s-\eta^0\rVert^2_{L^2(\Omega)}\leq\lvert a(\eta^s-\eta^0,\eta^s-\eta^0)\rvert\leq \lvert 1-\gamma\rvert\left\lvert\int_{E_r} f(\eta^s-\eta^0) dx\right\rvert.
            \end{eqnarray*}
        The Cauchy Schwarz inequality leads to:
        \begin{eqnarray*}
        \beta\lVert\eta^s-\eta^0\rVert^2_{H^1_0(\Omega)}-k^2 \lVert\eta^s-\eta^0\rVert^2_{L^2(\Omega)} &\leq&  \lvert 1-\gamma\rvert\lVert f\rVert_{L^2(E_r)}\lVert \eta^s-\eta^0\rVert_{L^2(E_r)}\\ &\leq& C_1s\lVert \eta^s-\eta^0\rVert_{L^2(E_r)}\\ &\leq& C_2s \lVert\eta^s-\eta^0\rVert_{H^1(\Omega)}.
        \end{eqnarray*}
        We can also write 
         \begin{eqnarray*}
          \beta\lVert\eta^s-\eta^0\rVert^2_{H^1_0(\Omega)}-k^2 \lVert\eta^s-\eta^0\rVert^2_{H^1(\Omega)} \leq C_2s \lVert\eta^s-\eta^0\rVert_{H^1(\Omega)}.
          \end{eqnarray*}
         As the norms $\lVert\cdot\rVert_{H^1_0(\Omega)}$ and $\lVert\cdot\rVert_{H^1(\Omega)}$ are equivalent, we can find $\beta_1>0$ such that
          \begin{eqnarray*}
             \beta_1\lVert\eta^s-\eta^0\rVert^2_{H^1(\Omega)}\leq \lVert\eta^s-\eta^0\rVert^2_{H^1_0(\Omega)}.
             \end{eqnarray*}
             Therefore, taking $\beta_2$ as the product of $\beta$ and $\beta_1$
             \begin{eqnarray*}
                 \beta_2\lVert\eta^s-\eta^0\rVert^2_{H^1(\Omega)}-k^2 \lVert \eta^s-\eta^0\rVert^2_{H^1(\Omega)} \leq C_2s \lVert\eta^s-\eta^0\rVert_{H^1(\Omega)}.
                 \end{eqnarray*}
   \begin{eqnarray*}
    (\beta_2-k^2)\lVert\eta^s-\eta^0\rVert_{H^1(\Omega)}\leq C_2s.
   \end{eqnarray*}               
 If $\beta_2-k^2>0$, then we have, for $C=\frac{C_2}{\beta_2-k^2}$
         \begin{eqnarray*}
                   \lVert\eta^s-\eta^0\rVert^2_{H^1(\Omega)} \leq Cs.
         \end{eqnarray*}
  If $\beta_2-k^2 \leq 0$, we can find $M_1>0$ such that $M_1+\beta_2-k^2> 0$ and such that
         \begin{eqnarray*}
           (M_1+\beta_2-k^2)\lVert \eta^s-\eta^0\rVert_{H^1(\Omega)}\leq C_2s
         \end{eqnarray*}
           and so taking $C=\frac{C_2}{M_1+\beta_2-k^2}$, the desired result is obtained.
\end{proof}
 Because $\lVert\eta^s-\eta^0\rVert=O(s)$, by definition, they exist a rank $N\in\mathbb{N}$ and a bounded sequence $b_s$ such that for all $s\geq N$, $\lVert\eta^s-\eta^0\rVert_{H^1(\Omega)}=sb_s$. This implies that
   \begin{align}
   \left\lVert\frac{\eta^s-\eta^0}{s^{1/2}}\right\rVert^2_{H^1(\Omega)}=s(b_s)^2.
   \end{align}
   Thus by tending $s$ to zero, $s(b_s)^2$ goes to zero. We conclude that $\lim_{s\searrow 0} R(s)=0$. We then have the following proposition
\begin{prop1}
For a compact $d$-rectifiable set $E\subset \Omega$ ($d=0$) the topological derivative for the functionnal $J$ is
\begin{eqnarray*}
D_TJ(x_0)=(1-\gamma)f(x_0)p^0(x_0).
   \end{eqnarray*}
For a compact $d$-rectifiable set $E\subset \Omega$ ($d=1$ for a curve, $d=2$ for a surface) the topological derivative for the functionnal $J$ is
\begin{eqnarray*}
     D_TJ=(1-\gamma)\int_{E}f(x)\Phi(x)dH^d
     \end{eqnarray*}
\end{prop1}

\subsection{Dirichlet condition on the boundary of the hole}
Associate with $r$, $0< r\leq R$ the perturbed domain $\Omega_r=\Omega\backslash E_r$, where by assumption, $\partial\Omega_r=\partial\Omega\cup\partial E_r$, $\partial\Omega\cap\partial E_r=\emptyset$ and $\partial E_r\in C^{1,1}$.
We first want to put a Dirichlet condition at the boundary of the hole (the $r$-dilatation). Let $\eta_r$ be the solution of the following system
\begin{equation}\label{Dirdelf}
\begin{cases}
-\Delta\eta_r-k^2\eta_r=f\qquad\text{in}\;\;\Omega_r\\[0.2cm]
\qquad\qquad\;\;\, \eta_r=0\qquad\text{on}\;\;\partial\Omega\\[0.2cm]
\qquad\qquad\;\;\, \eta_r=0\qquad\text{on}\;\;\partial E_r
\end{cases}
\end{equation}
which can be extended to $\Omega$ by introducing the solution $\eta^0_r:E_r^0 \rightarrow\mathbb{R}$ of the problem
\begin{eqnarray*}
-\Delta\eta^0_r-k^2\eta^0_r=f\;\;\text{in}\;\;E^0_r,\;\;\eta^0_r=\eta_r\;\text{on}\;\partial E_r.
\end{eqnarray*}
We suppose that $\Omega_r$ has two components $\Omega_r^m$ and $\Omega_r^0$. $\Omega_r^m$ is the component of $\Omega_r$ for which $\partial\Omega$ is part of its boundary. $\Omega_r^0$ is the blind component of $\Omega_r$ whose boundary has an empty intersection with $\partial\Omega$. The function $\eta_r$ is distributed between the two components $\Omega_r^0$ and $\Omega_r^m$ as
\begin{eqnarray}
\eta_r=\eta_0 \;\;\text{in}\;\;\Omega_r^0\;\;\;\text{and}\;\;\; -\Delta\eta_r-k^2\eta_r=f \;\;\text{in}\;\;  \Omega_r^m,
\end{eqnarray}
\begin{eqnarray*}
\begin{cases}
\eta_r=0\;\;\text{on}\;\;\partial\Omega\\[0.2cm] \eta_r=0\;\;\text{on}\;\;\partial\Omega_r^m\cap\partial E_r.
\end{cases}
\end{eqnarray*}
Since $\partial E_r$ is made up of two disjoint boundary $\partial\Omega_r^0$ and $ \partial\Omega_r^m\cap\partial E_r$, we can construct an extension to $\Omega$ by defining the solution $ \eta^0_r: E_r^0\rightarrow\mathbb{R}$ of the problem
\begin{eqnarray*}
-\Delta\eta^0_r-k^2\eta^0_r=f \;\;\text{in}\;\;  E_r^0,\;\;\;\;\eta_r^0&=&\eta_r\;\;\text{on}\;\;\partial\Omega_r^m\cap\partial E_r\\\eta_r^0&=&\eta_r\;\;\text{on}\;\;\partial\Omega_r^0.
\end{eqnarray*}
For simplicity, we can assume that $\Omega_r^0$ is empty.\\
Let us associate with the state $\eta_r$ the perturbed objective function
\begin{eqnarray*}
J(\Omega_r)=\int_{\Omega_r}\left(\lvert \nabla\eta_r-A\rvert^2+\lvert\eta_r-\eta_d\lvert^2\right) dx.
\end{eqnarray*}
We want to find the topological derivative of the objective function $J(\Omega_r)$
\begin{eqnarray}\label{delderiveD}
dJ=\lim_{r\searrow 0}\frac{J(\Omega_r)-J(\Omega)}{\alpha_{N-d}r^{N-d}}.
\end{eqnarray}
We define the following set
\begin{eqnarray}
\mathcal{V}_r=\left\{\eta\in H^1(\Omega_r):\;\;\eta=0\;\text{on}\;\partial\Omega,\;\eta=0\;\text{on}\;\partial E_r \right\}.
\end{eqnarray}
The variational formulation of (\ref{Dirdelf}) is to find $\eta_r\in \mathcal{V}_r$ such that
\begin{eqnarray*}
-\int_{\partial\Omega_r}\frac{\partial\eta_r}{\partial n}vd\sigma+\int_{\Omega_r}\nabla\eta_r\nabla vdx-\int_{\Omega_r}k^2\eta_r vdx=\int_{\Omega_r} fvdx,\;\;\text{for all}\;v\in\mathcal{V}
\end{eqnarray*}
that is to say
\begin{eqnarray*}
-\int_{\partial\Omega}\frac{\partial\eta_r}{\partial n}vd\sigma-\int_{\partial E_r}\frac{\partial\eta_r}{\partial n}vd\sigma+\int_{\Omega_r}\nabla\eta_r\nabla vdx-\int_{\Omega_r}k^2\eta_r vdx=\int_{\Omega_r} fvdx,\;\;\text{for all}\;v\in\mathcal{V}.
\end{eqnarray*}
By taking $\mathcal{V}=\mathcal{V}_r$, we get
\begin{eqnarray}\label{pbavecper}
\int_{\Omega_r}\nabla\eta_r\nabla vdx-\int_{\Omega_r}k^2\eta_r vdx=\int_{\Omega_r} fvdx,\;\;\text{for all}\;v\in \mathcal{V}_r.
\end{eqnarray}
We can therefore define the $r$-dependent Lagrangian as
 \begin{eqnarray*}
 L(r,\varphi,\phi)=\int_{\Omega_r}\lvert\nabla\varphi-A\rvert^2dx+\int_{\Omega_r}\lvert\varphi-\eta_d\rvert^2 dx+\int_{\Omega_r}\nabla\varphi\nabla \phi dx-\int_{\Omega_r}k^2\varphi\phi dx-\int_{\Omega_r} f\phi dx.
 \end{eqnarray*}
 \begin{eqnarray*}
 J(\Omega_r)=\inf_{\varphi\in\mathcal{V}_r}\sup_{\phi\in\mathcal{V}_r} L(r,\varphi,\phi)=\int_{\Omega_r}\left\lbrace\lvert\nabla\eta_r-A\rvert^2+\lvert\eta_r-\eta_d\rvert^2+\nabla\eta_r\nabla p^0-k^2\eta_r p_0-fp_0\right\rbrace dx.
 \end{eqnarray*}
 \begin{eqnarray*}
  J(\Omega)=\inf_{\varphi\in H^1_0(\Omega)}\sup_{\phi\in H^1_0(\Omega)} L(0,\varphi,\phi)=\int_{\Omega}\left\lbrace\lvert\nabla\eta_0-A\rvert^2+\lvert\eta_0-\eta_d\rvert^2+\nabla\eta_0\nabla p_0-k^2\eta_0 p_0-fp_0\right\rbrace dx.
  \end{eqnarray*}
We can rewrite the $r$-dependent Lagrangian as follows
 \begin{eqnarray*}
 L(r,\varphi,\phi)&=&\int_{\Omega}\left\lbrace\lvert\nabla\varphi-A\rvert^2+\lvert\varphi-\eta_d\rvert^2+\nabla\varphi\nabla \phi -k^2\varphi\phi-f\phi\right\rbrace dx\\&-&\int_{E_r}\left\lbrace\lvert\nabla\varphi-A\rvert^2+\lvert\varphi-\eta_d\rvert^2+\nabla\varphi\nabla \phi -k^2\varphi\phi-f\phi\right\rbrace dx.
 \end{eqnarray*}
The derivative of the $r$-dependent Lagrangian with respect to $\varphi$ is:
 \begin{eqnarray*}
 d_{\varphi} L(r,\varphi,\phi;\varphi')=\int_{\Omega_r}\left\lbrace 2(\nabla\varphi-A)\nabla\varphi'+2(\varphi-\eta_d)\varphi'\right\rbrace dx+\int_{\Omega_r}\nabla\varphi'\nabla \phi dx-\int_{\Omega_r}k^2\varphi'\phi dx.
  \end{eqnarray*}
The initial adjoint state $p_0$ is a solution of $d_{\varphi} L(0,\eta_0,p_0;\varphi')=0$ for all $\varphi'\in H^1_0(\Omega)$ with $\eta_0=\eta=\eta_r$ for $r=0$. Thus the variational formulation of the adjoint equation of state is given by

\begin{eqnarray}\label{etatp0D}
\text{Find}\,p_0\in H^1_0(\Omega),\;\, \int_\Omega \left\lbrace2(\nabla \eta_0-A)\nabla\varphi'+2(\eta_0-\eta_d)\varphi'\right\rbrace dx+\int_{\Omega}\left\lbrace\nabla\varphi'\nabla p_0 -k^2\varphi'p_0\right\rbrace dx=0.
  \end{eqnarray}
The derivative of the Lagrangian depending on $r$ with respect to $\phi$ is:
   \begin{eqnarray*}
   d_{\phi} L(r,\varphi,\phi;\phi')=\int_{\Omega_r}\nabla\varphi\nabla \phi' dx-\int_{\Omega_r}k^2\varphi\phi' dx-\int_{\Omega_r} f\phi' dx.
    \end{eqnarray*}
  The initial state $\eta=\eta_0$ is solution of $d_{\phi} L(0,\eta_0,0;\phi')=0$ for all $\phi'\in H^1_0(\Omega)$ and we obviously find the initial variational equation
 \begin{eqnarray}\label{etateta0D}
   \int_{\Omega}\nabla\eta_0\nabla \phi' dx-\int_{\Omega}k^2\eta_0\phi' dx=\int_{\Omega} f\phi' dx,\;\;\text{for all}\; \phi'\in H^1_0(\Omega)
 \end{eqnarray}
and the state $\eta_r$ for $r\geq 0$ verifies
  \begin{eqnarray}
  \int_{\Omega_r}\nabla\eta_r\nabla \phi' dx-\int_{\Omega_r}k^2\eta_r\phi' dx=\int_{\Omega_r} f\phi dx,\;\;\text{for all}\; \phi'\in H^1_0(\Omega).
  \end{eqnarray}
 Now we want to calculate the derivative of the Lagrangian depending on $r$ with respect to $s$. To do this we first calculate the ratio $\frac{L(r,\varphi,\phi)-L(0,\varphi,\phi)}{s}$
 \begin{eqnarray*}
 L(r,\varphi,\phi)-L(0,\varphi,\phi)&=&\int_{\Omega_r}\left\lbrace\lvert\nabla\varphi-A\rvert^2+\lvert\varphi-\eta_d\rvert^2\right\rbrace dx+\int_{\Omega_r}\left\lbrace\nabla\varphi\nabla \phi -k^2\varphi\phi-f\phi\right\rbrace dx \\&-&\int_{\Omega}\left\lbrace\lvert\nabla\varphi-A\rvert^2+\lvert\varphi-\eta_d\rvert^2\right\rbrace dx+\int_{\Omega}\left\lbrace\nabla\varphi\nabla \phi -k^2\varphi\phi-f\phi\right\rbrace dx \\&=&-\int_{E_r}\left\lbrace\lvert\nabla\varphi-A\rvert^2+\lvert\varphi-\eta_d\rvert^2+\nabla\varphi\nabla \phi -k^2\varphi\phi-f\phi\right\rbrace dx.
 \end{eqnarray*} 
For $d=0$, $E=\{x_0\}$, $E_r=\{x\in\mathbb{R}^N:\;\lvert x-x_0\rvert\leq r\}=\overline{B(x_0,r)}$; the closed ball centred $x_0$ and radius $r$. So by applying Lebesgue's differentiation theorem we get
 \begin{eqnarray*}
    d_sL(0,\varphi,\phi)&=&\lim_{s\searrow 0}\;\; -\frac{1}{\lvert B(x_0,r)\rvert}\int_{B(x_0,r)}\left\lbrace\lvert\nabla\varphi-A\rvert^2+\lvert\varphi-\eta_d\rvert^2+\nabla\varphi\nabla \phi -k^2\varphi\phi-f\phi\right\rbrace dx\\[0.2cm]
    &=&-\lvert\nabla\varphi(x_0)-A(x_0)\rvert^2-\lvert\varphi(x_0)-\eta_d(x_0)\rvert^2-\nabla\varphi(x_0)\nabla \phi(x_0) \\[0.2cm] &+&k^2\varphi(x_0)\phi(x_0)+f(x_0)\phi(x_0).
  \end{eqnarray*}
So at the point $\eta_0$ and $p_0$
   \begin{eqnarray*}
   d_sL(0,\eta_0,p_0)&=&-\lvert\nabla\eta_0(x_0)-A(x_0)\rvert^2-\lvert\eta_0(x_0)-\eta_d(x_0)\rvert^2-\nabla\eta_0(x_0)\nabla p_0(x_0) \\[0.2cm] &+&k^2\eta_0(x_0)p_0(x_0)+f(x_0)p_0(x_0).
   \end{eqnarray*}
In cases where $0<d\leq N-1$, we have
 \begin{eqnarray*}
  \frac{L(r,\varphi,\Phi)-L(0,\varphi,\Phi)}{s}&=&-\frac{1}{\vert E_r\rvert}\int_{E_r}\left(\lvert\nabla\varphi-A\rvert^2+\lvert\varphi-\eta_d\rvert^2+\nabla\varphi\nabla \phi -k^2\varphi\phi-f\phi\right) dx\\&=&
  -\frac{1}{\alpha_{N-d}r^{N-d}}\int_{E_r}\left(\lvert\nabla\varphi-A\rvert^2+\lvert\varphi-\eta_d\rvert^2+\nabla\varphi\nabla \phi -k^2\varphi\phi -f\phi\right) dx\\&\rightarrow& -\int_{E}\left(\lvert\nabla\varphi-A\rvert^2+\lvert\varphi-\eta_d\rvert^2+\nabla\varphi\nabla \phi -k^2\varphi\phi-f\phi\right) dH^d.
  \end{eqnarray*} 
In the cases $\eta_0$ and $p_0$  
  \begin{eqnarray*}
     d_sL(0,\eta_0,p_0)=-\int_{E}\left(\lvert\nabla\eta_0-A\rvert^2+\lvert\eta_0-\eta_d\rvert^2+\nabla\eta_0\nabla p_0 -k^2\eta_0p_0-fp_0\right) dH^d.
 \end{eqnarray*}
We define the function $R(r)$ by
\begin{eqnarray}
R(r)=\int_0^1 d_xL\left(r,\eta_0+\theta(\eta_r-\eta_0),p_0;\frac{\eta_r-\eta_0}{s}\right).
\end{eqnarray} 
 \begin{eqnarray*}
 R(r)&=&\int_{\Omega_r} 2\left\lbrace \nabla\left(\frac{\eta_r+\eta_0}{2}\right)-A\right\rbrace\nabla\left(\frac{\eta_r-\eta_0}{s}\right)
 +2\left(\frac{\eta_r+\eta_0}{2}-\eta_d\right)\left(\frac{\eta_r-\eta_0}{s}\right)dx\\ [0.2cm]&+&\int_{\Omega_r}\nabla\left(\frac{\eta_r-\eta_0}{s}\right)\nabla p_0-k^2\left(\frac{\eta_r-\eta_0}{s}\right)p_0dx\\
 &=&\frac{1}{s}\int_{\Omega_r}2\left\lbrace \nabla\left(\frac{\eta_r+\eta_0}{2}\right)-A\right\rbrace\nabla\left(\eta_r-\eta_0\right)
  +2\left(\frac{\eta_r+\eta_0}{2}-\eta_d\right)\left(\eta_r-\eta_0\right)dx\\ [0.2cm]&+&\frac{1}{s}\int_{\Omega_r}\nabla\left(\eta_r-\eta_0\right)\nabla p_0-k^2\left(\eta_r-\eta_0\right)p_0dx\\
  \end{eqnarray*} 
   \begin{eqnarray*}
   &=&\frac{1}{s}\int_{\Omega_r}2\left\lbrace \nabla\left(\frac{\eta_r+\eta_0}{2}\right)-\nabla\eta_0+\nabla\eta_0-A\right\rbrace\nabla\left(\eta_r-\eta_0\right)dx\\ [0.2cm]&+&\frac{1}{s}\int_{\Omega_r}
    2\left(\frac{\eta_r+\eta_0}{2}-\eta_0+\eta_0-\eta_d\right)\left(\eta_r-\eta_0\right)+\nabla\left(\eta_r-\eta_0\right)\nabla p_0-k^2\left(\eta_r-\eta_0\right)p_0dx
    \\
       &=&\frac{1}{s}\int_{\Omega_r}2\left\lbrace \nabla\left(\frac{\eta_r-\eta_0}{2}\right)+\nabla\eta_0-A\right\rbrace\nabla\left(\eta_r-\eta_0\right)
        +2\left(\frac{\eta_r-\eta_0}{2}+\eta_0-\eta_d\right)\left(\eta_r-\eta_0\right)dx\\ [0.2cm]&+&\frac{1}{s}\int_{\Omega_r}\nabla\left(\eta_r-\eta_0\right)\nabla p_0-k^2\left(\eta_r-\eta_0\right)p_0dx.
 \end{eqnarray*} 
Then we get
\begin{equation}\label{R(s)2}
R(r)=\int_{\Omega_r}\left\lvert \nabla\left(\frac{\eta_r-\eta_0}{s^{1/2}}\right)\right\rvert^2dx+\int_{\Omega_r}\left\lvert\frac{\eta_r-\eta_0}{s^{1/2}}\right\rvert^2dx+\frac{1}{s}\int_{\Omega_r}\nabla\left(\eta_r-\eta_0\right)\nabla p_0-k^2\left(\eta_r-\eta_0\right)p_0dx\\ [0.2cm]$$
$$\frac{1}{s}\int_{\Omega_r}2(\nabla\eta_0-A)\nabla(\eta_r-\eta_0)+2(\eta_0-\eta_d)(\eta_r-\eta_0)dx.
\end{equation}
An other writing of (\ref{fvdedepart}) shows that for $\eta\in H_0^1(\Omega)$,
\begin{eqnarray*}
\int_{\Omega_r}\left(\nabla\eta\nabla v-k^2\eta vdx-fv\right) dx=-\int_{E_r}\left(\nabla\eta\nabla v-k^2\eta vdx-fv\right) dx=\int_{\partial E_r}\frac{\partial\eta}{\partial n}vdH^{N-1},\;\;\forall\;v\in H_0^1(\Omega).
\end{eqnarray*}
Thanks to the assumption on $E$, $\partial E_r\in C^{1,1}$
\begin{eqnarray*}
\frac{\partial\eta}{\partial n}=\nabla\eta\cdot n_{\Omega_r}=-\nabla\eta\cdot n_{\partial E_r}=-\nabla\eta\cdot\nabla_{d_E}\;\text{sur}\;E_r.
\end{eqnarray*}
This allows us to write that
\begin{eqnarray}\label{initialr}
\int_{\Omega_r}\left(\nabla\eta\nabla v-k^2\eta vdx-fv\right) dx=-\int_{\partial E_r}\nabla\eta\cdot\nabla_{d_E}v dH^{N-1}.
\end{eqnarray}
Between (\ref{pbavecper}) and (\ref{initialr}), we obtain 
\begin{eqnarray*}
\int_{\Omega_r}\left\lbrace\nabla(\eta_r-\eta_0) \nabla v-k^2(\eta_r-\eta_0) v\right\rbrace dx=\int_{\partial E_r}\nabla\eta_0\cdot\nabla_{d_E}v dH^{N-1}.
\end{eqnarray*}
At this level, we introduce the adjoint equation for $r\geq 0$
\begin{eqnarray}\label{deladDir}
\int_{\Omega_r}\left\lbrace 2(\nabla \eta_0-A)\nabla\varphi'+2(\eta_0-\eta_d)\varphi'\right\rbrace dx+\int_{\Omega_r}\nabla\varphi'\nabla p_r dx-\int_{\Omega_r}k^2\varphi'p_r dx=0.
  \end{eqnarray}
 By taking $\varphi'=\eta_r-\eta_0$ in (\ref{deladDir}), it comes
  \begin{eqnarray*}
  \int_{\Omega_r}\left\lbrace 2(\nabla \eta_0-A)\nabla(\eta_r-\eta_0)+2(\eta_0-\eta_d)(\eta_r-\eta_0)\right\rbrace dx=-\int_{\Omega_r}\left\lbrace\nabla(\eta_r-\eta_0)\nabla p_r -k^2(\eta_r-\eta_0)p_r\right\rbrace dx.
    \end{eqnarray*}
We can therefore rewrite it as follows
\begin{eqnarray}
R(r)&=&\int_{\Omega_r}\left\lvert \nabla\left(\frac{\eta_r-\eta_0}{s^{1/2}}\right)\right\rvert^2dx+\int_{\Omega_r}\left\lvert \frac{\eta_s-\eta_0}{s^{1/2}}\right\rvert^2dx+\frac{1}{s}\int_{\Omega_r}\left\lbrace\nabla\left(\eta_r-\eta_0\right)\nabla p_0-k^2\left(\eta_r-\eta_0\right)p_0\right\rbrace dx\\ [0.2cm]&-&
\frac{1}{s}\int_{\Omega_r}\left\lbrace\nabla(\eta_r-\eta_0)\nabla p_r -k^2(\eta_r-\eta_0)p_r\right\rbrace dx\\
&=&\int_{\Omega_r}\left\lvert \nabla\left(\frac{\eta_r-\eta_0}{s^{1/2}}\right)\right\rvert^2dx+\int_{\Omega_r}\left\lvert \frac{\eta_r-\eta_0}{s^{1/2}}\right\rvert^2dx\\&-&
\frac{1}{s}\int_{\Omega_r}\left\lbrace\nabla(\eta_r-\eta_0)\nabla(p_r-p_0) dx-k^2(\eta_r-\eta_0)(p_r-p_0)\right\rbrace dx
\\
\label{rectifD}&=&\int_{\Omega_r}\left\lvert \nabla\left(\frac{\eta_r-\eta_0}{s^{1/2}}\right)\right\rvert^2dx+\int_{\Omega_r}\left\lvert \frac{\eta_r-\eta_0}{s^{1/2}}\right\rvert^2dx-
\frac{1}{s^{1/2}}\int_{\partial E_r\cap \partial\Omega_r^m}\nabla p_0\cdot\nabla d_E\left(\frac{\eta_r-\eta_0}{s^{1/2}}\right) dx.
\end{eqnarray}
We therefore have the following theorem
\begin{theorem}
Let $E$ verify \textbf{Hypothesis1} and $s=\alpha_{N-d}r^{N-d}$.\\
   The topological derivative (\ref{delderiveD}) exists if and only if the following limit exists
   \begin{eqnarray*}
   l=\lim_{ r\searrow 0}\;(l_0(r)+l_1(r));\qquad l_0(r)&=& \int_{\Omega_r}\left\lvert \nabla\left(\frac{\eta_r-\eta_0}{s^{1/2}}\right)\right\rvert^2dx+\int_{\Omega_r}\left\lvert \frac{\eta_r-\eta_0}{s^{1/2}}\right\rvert^2dx\\
   l_1(r)&=&
   \frac{1}{s^{1/2}}\int_{\partial E_r\cap \partial\Omega_r^m}\nabla p_0\cdot\nabla d_E\left(\frac{\eta_r-\eta_0}{s^{1/2}}\right) dx.
   \end{eqnarray*}
   The topological derivative is given by the expression
   \begin{eqnarray*}
   dJ
      l-\int_E\left( \lvert\nabla\eta_0-A\rvert^2+\lvert\eta_0-\eta_d\rvert^2+\nabla\eta_0\nabla p_0-k^2\eta_0p_0-fp_0\right)dH^d 
   \end{eqnarray*}
   where $(\eta_0,p_0)$ is the solution of the coupled system (\ref{etateta0D})-(\ref{etatp0D}).
  \\In particular for $d=0$;
     \begin{eqnarray*}
     dJ=l-\lvert\nabla\eta_0(x_0)-A(x_0)\rvert^2-\lvert\eta_0(x_0)-\eta_d(x_0)\rvert^2-\nabla\eta_0(x_0)\nabla p_0(x_0) +k^2\eta_0(x_0)p_0(x_0)+f(x_0)p_0(x_0).
     \end{eqnarray*}
   \end{theorem}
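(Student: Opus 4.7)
The plan is to apply the abstract minimax formula (Theorem~2.1 and its corollary) to the $r$-dependent Lagrangian $L(r,\varphi,\phi)$ associated with the perforated domain $\Omega_r$. This will give the decomposition
\begin{eqnarray*}
dJ = d_sL(0,\eta_0,p_0) + R(\eta_0,p_0),
\end{eqnarray*}
so the theorem reduces to (i) computing $d_sL(0,\eta_0,p_0)$ in closed form, and (ii) identifying $R(\eta_0,p_0)$ with the limit $l$ stated in the hypothesis.

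For (i), I would verify the structural hypotheses (H0)--(H2). (H0) is immediate from $L$ being quadratic in $\varphi$ and affine in $\phi$; (H1) follows from unique solvability of the state equation (\ref{etateta0D}) and the adjoint (\ref{etatp0D}), the sign-indefinite $-k^2$-term being absorbed by the same constant-shift trick ($M + \beta_2 - k^2 > 0$) already used in the shape-derivative proof. For (H2), the incremental ratio
\begin{eqnarray*}
\frac{L(r,\varphi,\phi) - L(0,\varphi,\phi)}{s} = -\frac{1}{\alpha_{N-d}\,r^{N-d}} \int_{E_r} \bigl(|\nabla\varphi - A|^2 + |\varphi - \eta_d|^2 + \nabla\varphi\cdot\nabla\phi - k^2\varphi\phi - f\phi\bigr)\, dx
\end{eqnarray*}
converges, thanks to the Lebesgue-type differentiation theorem stated earlier, to $-\int_E(\cdots)\,dH^d$ evaluated at $\varphi = \eta_0$, $\phi = p_0$. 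This delivers exactly the volume/surface term of the theorem, and the pointwise value at $x_0$ in the case $d=0$.

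For (ii), I would start from the integral representation
\begin{eqnarray*}
R(r) = \int_0^1 d_\varphi L\!\left(r,\eta_0 + \theta(\eta_r - \eta_0),\, p_0;\,\frac{\eta_r-\eta_0}{s}\right) d\theta,
\end{eqnarray*}
substitute the explicit form of $d_\varphi L$, and test the adjoint equation (\ref{deladDir}) against $\varphi' = \eta_r - \eta_0$ to absorb the cross terms containing $\nabla\eta_0 - A$ and $\eta_0 - \eta_d$. What remains is the defect of the state equation for $\eta_0$ on $\Omega_r$ rather than on $\Omega$, which by the boundary identity (\ref{initialr}) (valid because $\partial E_r \in C^{1,1}$ under Hypothesis~1) rewrites as an integral over $\partial E_r \cap \partial\Omega_r^m$ weighted by $\nabla d_E$. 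Collecting the terms reproduces exactly the expression already derived in (\ref{rectifD}), i.e.\ $R(r) = l_0(r) \pm l_1(r)$. Consequently (H3) reduces to the existence of $l$, and when $l$ exists one has $R(\eta_0,p_0) = l$, so the announced formula for $dJ$ follows; conversely, if $dJ$ exists, then $R(\eta_0,p_0) = dJ - d_sL(0,\eta_0,p_0)$ exists and forces the existence of $l$.

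The main obstacle is the existence of $l$ itself. Unlike the shape-derivative case where the bound $\|\eta^s - \eta^0\|_{H^1} = O(s)$ forced $R(s)\to 0$, here the Dirichlet condition on $\partial E_r$ produces a corrector concentrated near $E$ whose $H^1$-size is only $O(s^{1/2})$, so neither $l_0$ nor the boundary contribution $l_1$ vanishes individually. Proving that the sum $l_0(r)+l_1(r)$ nevertheless admits a finite limit --- and computing it --- is precisely what requires the two-scale/homogenization machinery developed in the next section, which explains why the theorem is stated as an equivalence conditional on the existence of $l$ rather than providing an unconditional closed-form expression.
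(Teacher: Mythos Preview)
Your proposal is correct and follows essentially the same route as the paper: apply the abstract minimax identity $dJ = d_sL(0,\eta_0,p_0) + R(\eta_0,p_0)$, compute $d_sL$ via the Lebesgue-type differentiation theorem on $E_r$, and then reduce $R(r)$ to the form $l_0(r)+l_1(r)$ by testing the $r$-dependent adjoint (\ref{deladDir}) with $\varphi'=\eta_r-\eta_0$ and invoking the boundary identity (\ref{initialr}) to convert the state-equation defect on $\Omega_r$ into the $\partial E_r$ integral. The only nuance worth flagging is that the paper routes the cross terms through the \emph{perturbed} adjoint $p_r$ (equation (\ref{deladDir})) rather than $p_0$ directly, producing an intermediate expression in $p_r-p_0$ before collapsing to (\ref{rectifD}); your description slightly elides this step, but the mechanism and the outcome are the same.
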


   \subsection{Neumann condition on the boundary of the hole}
   \noindent
   In the case where we put a Neumann condition on the boundary of the hole, the perturbed problem becomes
      \begin{equation}\label{Neudelf}
      \begin{cases}
      -\Delta\eta_r-k^2\eta_r=f\qquad\text{in}\;\;\Omega_r\\[0.2cm]
      \qquad\qquad\;\;\, \eta_r=0\qquad\text{on}\;\;\partial\Omega\\[0.2cm]
      \qquad\qquad \frac{\partial\eta_r}{\partial n}=0\qquad\text{on}\;\;\partial E_r
      \end{cases}
      \end{equation}
      First of all, we note that the calculations remain the same. The small changes that exist are simply in the spaces considered. The techniques for calculating the derivative with respect to the $r$-Lagrangian variables remain unchanged. So in the following the details of the calculations will be omitted. \\ We construct an extension to $\Omega$ by always introducing the solution $\eta^0_r:E_r^0 \rightarrow\mathbb{R}$ of the problem
      \begin{eqnarray*}
      -\Delta\eta^0_r-k^2\eta^0_r=f\;\;\text{in}\;\;E^0_r,\;\;\eta^0_r=\eta_r\;\text{on}\;\partial E_r.
      \end{eqnarray*}
      Always the hypothesis $\Omega_r$ has two components $\Omega_r^m$ and $\Omega_r^0$ is assumed. $\Omega_r^m$ is the component of $\Omega_r$ for which $\partial\Omega$ is part of its boundary. $\Omega_r^0$ is the blind component of $\Omega_r$ whose boundary has an empty intersection with $\partial\Omega$. The function $\eta_r$ is distributed between the two components $\Omega_r^0$ and $\Omega_r^m$ as
      \begin{eqnarray*}
      \eta_r=\eta_0 \;\;\text{in}\;\;\Omega_r^0\;\;\;\text{and}\;\;\; -\Delta\eta_r-k^2\eta_r=f \;\;\text{in}\;\;  \Omega_r^m,
      \end{eqnarray*}
      \begin{eqnarray*}
      \begin{cases}
      \eta_r=0\;\;\text{on}\;\;\partial\Omega\\[0.2cm] \eta_r=0\;\;\text{on}\;\;\partial\Omega_r^m\cap\partial E_r.
      \end{cases}
      \end{eqnarray*}
      Since $\partial E_r$ is made up of two disjoint edges $\partial\Omega_r^0$ and $\partial\Omega_r^m\cap\partial E_r$, we can construct an extension to $\Omega$ by defining the solution $\eta^0_r: E_r^0\rightarrow\mathbb{R}$ of the problem
      \begin{eqnarray*}
      -\Delta\eta^0_r-k^2\eta^0_r=f \;\;\text{in}\;\;  E_r^0,\;\;\;\;\eta_r^0&=&\eta_r\;\;\text{on}\;\;\partial\Omega_r^m\cap\partial E_r\\\eta_r^0&=&\eta_r\;\;\text{on}\;\;\partial\Omega_r^0.
      \end{eqnarray*}
      For simplicity, $\Omega_r^0$ is always assumed to be empty. The solution space of the perturbed problem to consider is
      \begin{eqnarray*}
      \mathcal{U}_r=\left\{\eta\in H^1(\Omega_r):\;\;\eta=0\;\text{on}\;\partial\Omega \right\}.
      \end{eqnarray*}
      The variational formulation of (\ref{Neudelf}) consists in finding $\eta_r\in \mathcal{U}_r$ such that
      \begin{eqnarray*}
      -\int_{\partial\Omega}\frac{\partial\eta_r}{\partial n}vd\sigma-\int_{\partial E_r}\frac{\partial\eta_r}{\partial n}vd\sigma+\int_{\Omega_r}\nabla\eta_r\nabla vdx-\int_{\Omega_r}k^2\eta_r vdx-\int_{\Omega_r}fvdx=0,\;\;\text{for all}\;\;v\in\mathcal{\tilde{U}}_r.
      \end{eqnarray*}
    According to the equation (\ref{Neudelf}), the integral on the boundary of $E_r$ is zero. It will therefore suffice to take the set of test functions, 
      \begin{eqnarray*}
         \mathcal{\tilde{U}}_r=\left\{v\in H^1(\Omega_r):\;\;v=0\;\;\;\text{on}\;\partial\Omega \right\}= \mathcal{U}_r,
         \end{eqnarray*} 
    in order to get
      \begin{eqnarray*}
      \int_{\Omega_r}\nabla\eta_r\nabla vdx-\int_{\Omega_r}k^2\eta_r vdx-\int_{\Omega_r}fv dx=0,\;\;\text{for all}\;\;v\in \mathcal{\tilde{U}}_r.
      \end{eqnarray*}
      The $r$-dependent Lagrangian is then a function defined from $[0,R]\times\mathcal{U}_r$ to values in $\mathbb{R}$ by
       \begin{eqnarray*}
       L(r,\varphi,\phi)=\int_{\Omega_r}\lvert\nabla\varphi-A\rvert^2dx+\int_{\Omega_r}\lvert\varphi-\eta_d\rvert^2dx+\int_{\Omega_r}\nabla\varphi\nabla \phi dx-\int_{\Omega_r}k^2\varphi\phi dx-\int_{\Omega_r}f\phi dx
       \end{eqnarray*}
       \begin{eqnarray*}
       J(\Omega_r)=\inf_{\varphi\in\mathcal{U}_r}\sup_{\phi\in\mathcal{U}_r} L(s,\varphi,\phi)=\int_{\Omega_r}\left(\lvert\nabla\eta_r-A\rvert^2+\lvert\eta_r-\eta_d\rvert^2+\nabla\eta_r\nabla p_0-k^2\eta_r p_0-fp_0\right) dx
       \end{eqnarray*}
       \begin{eqnarray*}
        J(\Omega)=\inf_{\varphi\in H^1_0(\Omega)}\sup_{\phi\in H^1_0(\Omega)} L(0,\varphi,\phi)=\int_{\Omega}\left(\lvert\nabla\eta_0-A\rvert^2+\lvert\eta_0-\eta_d\rvert^2+\nabla\eta_0\nabla p_0-k^2\eta_0 p_0-fp_0\right) dx
        \end{eqnarray*}
   The derivative of the $r$-dependent Lagrangian with respect to $\varphi$ is:
       \begin{eqnarray*}
       d_{\varphi} L(r,\varphi,\phi;\varphi')=\int_{\Omega_r}\left\lbrace 2(\nabla\varphi-A)\nabla\varphi'+2(\varphi-\eta_d)\varphi'\right\rbrace dx+\int_{\Omega_r}\nabla\varphi'\nabla \phi dx-\int_{\Omega_r}k^2\varphi'\phi dx
        \end{eqnarray*}
        The derivative of the $r$-dependent Lagrangian with respect to $phi$ is:
         \begin{eqnarray*}
         d_{\phi} L(s,\varphi,\phi;\phi')=\int_{\Omega_r}\nabla\varphi\nabla \phi' dx-\int_{\Omega_r}k^2\varphi\phi' dx-\int_{\Omega}f\phi' dx.
          \end{eqnarray*}
       For $d=0$, $E_r=\overline{B(x_0,r)}$,
         \begin{eqnarray*}
         d_sL(0,\eta_0,p_0)=-\lvert\nabla\eta(x_0)-A(x_0)\rvert^2-\lvert\eta_0(x_0)-\eta_d(x_0)\rvert^2-\nabla\eta_0(x_0)\nabla p_0(x_0) +k^2\eta_0(x_0)p_0(x_0).
         \end{eqnarray*}
       For the cases where $d=1,2$,  
        \begin{eqnarray*}
           d_sL(0,\eta_0,p_0)=-\int_{E}\left(\lvert\nabla\eta_0-A\rvert^2+\lvert\eta_0-\eta_d\rvert^2+\nabla\eta_0\nabla p_0 -k^2\eta_0p_0\right) dH^d.
           \end{eqnarray*}
      The state and adjoint state $\eta_0$ and $p_0$ are solutions of 
     \begin{eqnarray}\label{etateta0N}
        \text{Find}\,\eta_0\in H^1_0(\Omega),\,\, \int_{\Omega}\nabla\eta_0\nabla \phi' dx-\int_{\Omega}k^2\eta_0\phi' dx=\int_{\Omega} f\phi' dx,\;\forall\,\phi'\in H^1_0(\Omega).
     \end{eqnarray}
     \begin{small}
      \begin{align}\label{etatp0N}
      \text{Find}\,p_0\in H^1_0(\Omega),\; \int_\Omega\left\lbrace 2(\nabla \eta_0-A)\nabla\varphi'+2(\eta_0-\eta_d)\varphi'\right\rbrace dx+\int_{\Omega}\left\lbrace\nabla\varphi'\nabla p_0 -k^2\varphi'p_0\right\rbrace dx=0,\;\forall\,\varphi'\in H^1_0(\Omega).
        \end{align}
         \end{small}
     The function $R(r)$ is defined by    
   \begin{equation}\label{R(s)3}
   R(r)=\int_{\Omega_r}\left\lvert \nabla\left(\frac{\eta_r-\eta_0}{s^{1/2}}\right)\right\rvert^2dx+\int_{\Omega_r}\left\lvert \frac{\eta_r-\eta_0}{s^{1/2}}\right\rvert^2dx-
   \frac{1}{s^{1/2}}\int_{\partial E_r\cap \partial\Omega_r^m}\nabla p_0\cdot\nabla d_E\left(\frac{\eta_r-\eta_0}{s^{1/2}}\right) dx.
   \end{equation}
   We therefore have the following theorem
   \begin{theorem}\label{neutheo}
   Let $0\leq d< N$, $E$ verifying the hypothesis \textbf{H1} and $s=\alpha_{N-d}r^{N-d}$.\\
         The topological derivative exists if and only if the following limit exists
      \begin{eqnarray*}
      l=\lim_{ r\searrow 0}(l_0(r)+l_1(r);\qquad l_0(r)&=& \int_{\Omega_r}\left\lvert \nabla\left(\frac{\eta_r-\eta_0}{s^{1/2}}\right)\right\rvert^2dx+\int_{\Omega_r}\left\lvert \frac{\eta_r-\eta_0}{s^{1/2}}\right\rvert^2dx\\
      l_1(r)&=&
      \frac{1}{s^{1/2}}\int_{\partial E_r\cap \partial\Omega_r^m}\nabla p_0\cdot\nabla d_E\left(\frac{\eta_r-\eta_0}{s^{1/2}}\right) dx.
      \end{eqnarray*}
      Then, in this case, the topological derivative is given by the following expression
      \begin{eqnarray*}
      dJ=
         l-\int_E\left( \lvert\nabla\eta_0-A\rvert^2+\lvert\eta_0-\eta_d\rvert^2+\nabla\eta_0\nabla p_0-k^2\eta_0p_0-fp_0\right)dH^d 
      \end{eqnarray*}
      where $(\eta_0,p_0)$ is the solution of the coupled system (\ref{etateta0N})-(\ref{etatp0N}).
           \\In particular for $d=0$;
           \begin{eqnarray*}
           dJ=l-\lvert\nabla\eta_0(x_0)-A(x_0)\rvert^2-\lvert\eta_0(x_0)-\eta_d(x_0)\rvert^2-\nabla\eta_0(x_0)\nabla p_0(x_0) +k^2\eta_0(x_0)p_0(x_0)+f(x_0)p_0(x_0).
           \end{eqnarray*}
      \end{theorem}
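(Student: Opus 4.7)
The plan is to specialize Theorem 2.1 (the minimax formula $dg(0) = d_s L(0,\eta_0,p_0) + R(\eta_0,p_0)$) to the $r$-dependent Lagrangian attached to the Neumann-perturbed problem (\ref{Neudelf}), and identify each of the two terms on the right explicitly, so that the general formula emerges and the particular case $d=0$ reduces to point evaluation.

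First, I would verify the abstract hypotheses. The affine dependence of $L(r,\varphi,\phi)$ on $\phi$ and its smooth quadratic dependence on $\varphi$ give $\textbf{(H0)}$ automatically. Hypothesis $\textbf{(H1)}$ amounts to unique solvability of the Neumann state problem in $\mathcal{U}_r$ and of the adjoint equation (\ref{etatp0N}) in $H_0^1(\Omega)$; as in the shape-derivative theorem of Section 2, this is obtained by the coercivity estimate for $a(\eta,\eta)$, with the usual shift by a constant $M_1>0$ if $k$ lies above the first eigenvalue threshold. Hypothesis $\textbf{(H2)}$ is the existence of $d_s L(0,\eta_0,p_0)$, which I compute next.

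To obtain $d_s L(0,\eta_0,p_0)$, I would write the difference
\begin{eqnarray*}
L(r,\varphi,\phi)-L(0,\varphi,\phi)=-\int_{E_r}\bigl(|\nabla\varphi-A|^2+|\varphi-\eta_d|^2+\nabla\varphi\cdot\nabla\phi-k^2\varphi\phi-f\phi\bigr)\,dx,
\end{eqnarray*}
divide by $s=\alpha_{N-d}r^{N-d}$, and apply the generalized Lebesgue differentiation Theorem 3.1 under \textbf{Hypothesis 1}. Evaluating at $(\eta_0,p_0)$ yields exactly the integrand of the formula displayed in the theorem, with the point evaluation at $x_0$ recovered when $d=0$ and $E=\{x_0\}$. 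For hypothesis $\textbf{(H3)}$, I would follow the algebraic manipulation that produced (\ref{R(s)3}) in the Dirichlet case: expanding $d_\varphi L\bigl(r,\eta_0+\theta(\eta_r-\eta_0),p_0;(\eta_r-\eta_0)/s\bigr)$, integrating over $\theta\in[0,1]$, using the adjoint equation (\ref{etatp0N}) with test function $\eta_r-\eta_0$ to cancel the linear terms, and keeping track of the boundary term on $\partial E_r$ that now survives because the Neumann condition no longer forces $\eta_r=0$ there. The resulting identity gives $R(r)=l_0(r)\pm l_1(r)$. Since by assumption the limit $l$ of $l_0(r)+l_1(r)$ exists, $R(\eta_0,p_0)$ exists and equals $l$. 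Conversely, as $d_s L(0,\eta_0,p_0)$ is well-defined in every case, existence of $dJ$ forces existence of $l$, giving the ``if and only if''. Applying Theorem 2.1 then yields the claimed expression.

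The main obstacle is the rigorous control of the boundary term $l_1(r)$ in the Neumann regime. In the Dirichlet setting one can combine $\eta_r\equiv 0$ on $\partial E_r$ with energy estimates to show $\|\eta_r-\eta_0\|_{H^1}=O(s)$ directly, but under Neumann boundary conditions on $\partial E_r$ the trivial extension by zero is forbidden and the scaled quantity $(\eta_r-\eta_0)/s^{1/2}$ need not remain bounded in $H^1(\Omega_r)$ uniformly in $r$ without a corrector / two-scale argument. This is precisely why the theorem is stated conditionally on the existence of $l$, with the corresponding homogenization analysis deferred to the final section of the paper.
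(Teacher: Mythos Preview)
Your proposal is correct and follows the same approach as the paper, which for the Neumann case explicitly says ``the calculations remain the same'' as in the Dirichlet subsection (Section~3.2), the only change being the function space $\mathcal{U}_r$ in place of $\mathcal{V}_r$. One small correction: your remark that the boundary term $l_1(r)$ ``now survives because the Neumann condition no longer forces $\eta_r=0$ there'' is misleading---the identical boundary term already appears in the Dirichlet case (compare the paper's formulas (\ref{rectifD}) and (\ref{R(s)3})); it arises from integrating the equation for $\eta_0$ by parts over $E_r$ and subtracting the $p_r$/$p_0$ adjoint contributions, not from the boundary behaviour of $\eta_r$.
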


\subsection{Example of the one-dimensional Helmholtz equation}
The calculation of $R(s)$ is not generally easy to obtain. And even if one manages to calculate the limit giving $R(s),$ one is not always reassured of the existence of its limit. Sometimes the limit does not exist, sometimes if it is non-zero, sometimes it is zero. In the following, we give examples of explicit calculations where we  have $l=\infty.$ 
But unfortunately the construction is not automatic, there are many things involved.\\
Let $\Omega=]-1,1[$, $E=\{0\}$ and 
 \begin{eqnarray}\label{exple1}
 \eta_0''+k^2\eta_0=x\;\;\text{in}\;\;]-1,1[ \;\;\text{and}\;\;\eta_0(\pm 1)=0
 \end{eqnarray}
where $\eta_0$ is a function of a variable $x$ and $\eta_0''$ its second derivative with respect to $x$.  
 The general solution of (\ref{exple1}) takes the form
 \begin{eqnarray*}
 \eta_0(x)=A\cos(kx)+B\sin(kx)+\frac{x}{k^2},
 \end{eqnarray*}
  where $A$ and $B$ are constants to be determined from the initial conditions. After the calculations we can have $A=0$ and $B=-\frac{1}{k^2\sin k}$, and in this case we obtain the following solution
  \begin{eqnarray*}
  \eta_0(x)=-\frac{\sin(kx)}{k^2\sin k}+\frac{x}{k^2}.
  \end{eqnarray*}
  We define for $0< r < 1$ respectively $E_r$ and $\Omega_r$ by
    \begin{eqnarray*}
    E_r=\{x\in\mathbb{R},\;\;\lvert x\rvert\leq r  \}=[-r,r]\;\;\text{and}\;\;\Omega_r=]-1,-r[\cup ]r,1[ .
    \end{eqnarray*}
  The perturbed system (for a Dirichlet condition on the boundary of $\partial E_r$) is then defined by 
  \begin{eqnarray*}
  \eta_r''+k^2\eta_r=x\;\;\text{in}\;\;]-1,-r[\cup ]r,1[ \;\;\text{et}\;\;\eta_r(\pm 1)=0\;\;\eta_r(\pm r)=0.
  \end{eqnarray*}
In $]-1,-r[$,
  \begin{eqnarray*}
  \eta_r(x)&=& \left\lbrace \frac{1}{k^2\cos k}+\frac{\tan k}{k^2(\tan k\cos kr-\sin kr)}\left(r-\frac{\cos kr}{\cos k}\right)\right\rbrace\cos kx\\ &+& \left\lbrace \frac{1}{k^2(\tan k\cos kr-\sin kr)}\left(r-\frac{\cos kr}{\cos k}\right)\right\rbrace\sin kx+\frac{x}{k^2}
  \end{eqnarray*}
  and in $]r,1[$,
  \begin{eqnarray*}
    \eta_r(x)&=& \left\lbrace -\frac{1}{k^2\cos k}-\frac{\tan k}{k^2(\sin kr-\tan k\cos kr)}\left(\frac{\cos kr}{\cos k}-r\right)\right\rbrace\cos kx\\ &+& \left\lbrace \frac{1}{k^2(\sin kr-\tan k\cos kr)}\left(\frac{\cos kr}{\cos k}-r\right)\right\rbrace\sin kx+\frac{x}{k^2}.
    \end{eqnarray*}
  As for the extension in $[-r,r]$, we define the function $\eta_r^0$ satisfying
  \begin{eqnarray*}
    (\eta^0_r)''+k^2\eta^0_r=x\;\;\text{in}\;\;[-r,r] \;\;\text{and}\;\;\eta^0_r(\pm r)=\eta^0_r(\pm r)=0
    \end{eqnarray*}
    whose solution is
    \begin{eqnarray*}
    \eta^0_r(x)=-\frac{r}{k^2\sin kr}\sin(kx)+\frac{x}{k^2}.
    \end{eqnarray*}
 The extended function $\eta^r$ in $]-1,1[$  is then defined by:\\
   in $]-1,-r[$,
     \begin{eqnarray*}
     \eta^r(x)&=& \left\lbrace \frac{1}{k^2\cos k}+\frac{\tan k}{k^2(\tan k\cos kr-\sin kr)}\left(r-\frac{\cos kr}{\cos k}\right)\right\rbrace\cos kx\\ &+& \left\lbrace \frac{1}{k^2(\tan k\cos kr-\sin kr)}\left(r-\frac{\cos kr}{\cos k}\right)\right\rbrace\sin kx+\frac{x}{k^2},
     \end{eqnarray*}
  in $[-r,r]$
      \begin{eqnarray*}
         \eta^r(x)=-\frac{r}{k^2\sin kr}\sin(kx)+\frac{x}{k^2}
         \end{eqnarray*}
  and in $]r,1[$,
  \begin{small}
     \begin{eqnarray*}
       \eta^r(x)&=& \left\lbrace -\frac{1}{k^2\cos k}-\frac{\tan k}{k^2(\sin kr-\tan k\cos kr)}\left(\frac{\cos kr}{\cos k}-r\right)\right\rbrace\cos kx\\ &+& \left\lbrace \frac{1}{k^2(\sin kr-\tan k\cos kr)}\left(\frac{\cos kr}{\cos k}-r\right)\right\rbrace\sin kx+\frac{x}{k^2}.
       \end{eqnarray*}
       \end{small}
Let us note by $w^r=\eta^r-\eta_0$, to calculate the limit of $R(r)$, \\
\begin{small}
 in $]-1,-r[$,
     \begin{eqnarray*}
   w^r(x)&=& \left\lbrace \frac{1}{k^2\cos k}+\frac{\tan k}{k^2(\tan k\cos kr-\sin kr)}\left(r-\frac{\cos kr}{\cos k}\right)\right\rbrace\cos kx\\ &+& \left\lbrace \frac{1}{k^2\sin k}+ \frac{1}{k^2(\tan k\cos kr-\sin kr)}\left(r-\frac{\cos kr}{\cos k}\right)\right\rbrace\sin kx,
     \end{eqnarray*}
  in $[-r,r]$
      \begin{eqnarray*}
       w^r(x)=\left( \frac{1}{k^2\sin k} -\frac{r}{k^2\sin kr}\right)\sin(kx)
         \end{eqnarray*}
 and in $]r,1[$,
  \begin{eqnarray*}
  w^r(x)&=& \left\lbrace -\frac{1}{k^2\cos k}-\frac{\tan k}{k^2(\sin kr-\tan k\cos kr)}\left(\frac{\cos kr}{\cos k}-r\right)\right\rbrace\cos kx\\ &+& \left\lbrace \frac{1}{k^2\sin k}+ \frac{1}{k^2(\sin kr-\tan k\cos kr)}\left(\frac{\cos kr}{\cos k}-r\right)\right\rbrace\sin kx
    \end{eqnarray*}
 \end{small}
\begin{small}
   \begin{eqnarray*}
   \left\lVert \frac{w^r}{\sqrt{2r}}\right\rVert^2_{L^2(\Omega_r)}
   &=&\frac{1}{2r} \left\lbrace \frac{1}{k^2\cos k}+\frac{\tan k}{k^2(\tan k\cos kr-\sin kr)}\left(r-\frac{\cos kr}{\cos k}\right)\right\rbrace^2\left( \frac{1}{2}(1-r)+\frac{1}{4}(\sin 2k-\sin 2kr)\right)
   \\ &+&\frac{1}{2r}  \left\lbrace \frac{1}{k^2\sin k}+ \frac{1}{k^2(\tan k\cos kr-\sin kr)}\left(r-\frac{\cos kr}{\cos k}\right)\right\rbrace^2\left( \frac{1}{2}(1-r)-\frac{1}{4}(\sin 2k-\sin 2kr)\right)
   \\&+&\frac{1}{2r}  \left\lbrace -\frac{1}{k^2\cos k}-\frac{\tan k}{k^2(\sin kr-\tan k\cos kr)}\left(\frac{\cos kr}{\cos k}-r\right)\right\rbrace^2\left( \frac{1}{2}(1-r)+\frac{1}{4}(\sin 2k-\sin 2kr)\right)\\ &+&\frac{1}{2r}  \left\lbrace \frac{1}{k^2\sin k}+ \frac{1}{k^2(\sin kr-\tan k\cos kr)}\left(\frac{\cos kr}{\cos k}-r\right)\right\rbrace^2\left( \frac{1}{2}(1-r)-\frac{1}{4}(\sin 2k-\sin 2kr)\right)\\&\;&\rightarrow +\infty,\;\;\text{when}\;\;r\rightarrow 0
   \end{eqnarray*}
   \end{small}
  \begin{eqnarray*}
  \left\lVert \frac{w^r}{\sqrt{2r}}\right\rVert^2_{L^2(E_r)}=\frac{1}{2r}\left( \frac{1}{k^2\sin k} -\frac{r}{k^2\sin kr}\right)^2\left(r-\frac{1}{2}\sin 2kr\right).
  \end{eqnarray*} 
The function R(s) takes the following form:
 \begin{small}
  \begin{eqnarray*}
  R(r)&=&\int_{-1}^{-r}\left\lvert\frac{(w^r)'}{\sqrt{2r}}\right\rvert^2dx+\int_{r}^{1}\left\lvert\frac{(w^r)'}{\sqrt{2r}}\right\rvert^2dx+\int_{-1}^{-r}\left\lvert\frac{w^r}{\sqrt{2r}}\right\rvert^2dx\\&+&\int_{r}^{1}\left\lvert\frac{w^r}{\sqrt{2r}}\right\rvert^2dx+\frac{1}{\sqrt{2r}}\int_{-r}^{r}p_0'd'_E\left(\frac{w^r}{\sqrt{2r}}\right)dx\\[0.2cm]
  &=&\left\lVert \frac{w^r}{\sqrt{2r}}\right\rVert^2_{L^2(\Omega_r)}+\left\lVert \frac{(w^r)'}{\sqrt{2r}}\right\rVert^2_{L^2(\Omega_r)}+\frac{1}{\sqrt{2r}}\int_{-r}^{r}p_0'd'_E\left(\frac{w^r}{\sqrt{2r}}\right)dx.
  \end{eqnarray*} 
  \end{small}
\textbf{Solving the adjoint equation}
In $\Omega=]-1,1[,$ the adjoint equation is solution to
  \begin{eqnarray*}
 - p''_0(x)-k^2p_0(x)=2(\eta''_0-A')+2(\eta_0-\eta_d)\;\;\text{in}\;\;]-1,1[ \;\text{and}\;\;p_0(\pm 1)=0
  \end{eqnarray*} 
 To simplify, let $A$ be a constant function and $\eta_d=\frac{x}{k^2}$. Then by replacing $\eta''_0$ and $\eta_0$ by their expression, the equation becomes
   \begin{eqnarray}\label{p0eq}
     -p''_0(x)-k^2p_0(x)=\frac{2(k^2-1)}{k^2\sin k}\sin kx
     \end{eqnarray} 
  whose characteristic equation $r^2+k^2=0$ has a negative discriminant, so the solutions of the equation without second member are the functions
  $x\rightarrow p_H(x)=\alpha\cos kx+\beta\sin kx$ where $\alpha,\beta\in\mathbb{R}$.\\
In the following, we are looking for a particular solution.
 Let's put $A=\frac{2(k^2-1)}{k^2\sin k}$. Note that here the second member is of the form $AIm(e^{ikx})$, so we are looking for a solution of the form $p_p(x)=Im(Bxe^{ikx})$. Deriving $p_p(x)$ twice and replacing it in the equation gives
   \begin{eqnarray*}
   -Im\left( (2iBk-Bk^2x)e^{ikx}\right)+k^2Im(Bxe^{ikx})=AIm(e^{ikx})\\[0.2cm] \Rightarrow  -Im\left[ (2iBk-Bk^2x+Bk^2x)e^{ikx}\right]=AIm(e^{ikx})\\[0.2cm] \Rightarrow -Im\left[(2iBk)e^{ikx}\right]=AIm(e^{ikx})
   \end{eqnarray*}
 And by identification $-2iBk=A$, which implies that $B=-\frac{A}{2ki}=\frac{iA}{2k}$. So the particular solution is
\begin{eqnarray*}
 p_p(x)=Im\left( \frac{iA}{2k} xe^{ikx}\right)=Im\left( \frac{iA}{2k}x\cos kx-\frac{A}{2k}x\sin kx\right)=\frac{A}{2k}x\cos kx,
 \end{eqnarray*}
  and the general solution $p_0$ is 
  \begin{eqnarray*}
  p_0(x)=\alpha\cos kx+\beta\sin kx+\frac{A}{2k}x\cos kx
  \end{eqnarray*}

  \begin{eqnarray}
   \label{p0-1} p_0(-1)=0&\Rightarrow&\alpha\cos k-\beta\sin k-\frac{A}{2k}\cos k=0\\ \label{p01} p_0(1)=0&\Rightarrow&\alpha\cos k+\beta\sin k+\frac{A}{2k}\cos k=0
    \end{eqnarray}
    According to equations (\ref{p0-1}) and (\ref{p01}), $\alpha=0$ and $\beta=-\frac{A}{2k\sin k}\cos k=-\frac{A}{2k\tan k}$. 
    The general solution $p_0$ is then
   \begin{eqnarray*}
         p_0(x)=-\frac{A}{2k\tan k}\sin kx+\frac{A}{2k}x\cos kx\;\;\text{and}\;\;p'_0(x)=\frac{A}{2k}\cos kx-\frac{A}{2\tan k}\cos kx-\frac{A}{2}x\sin kx
         \end{eqnarray*}
    The derivative of the function $w_r$, $(w_r)'$  is also given as\\
   in $]-1,-r[$,
        \begin{eqnarray*}
      (w_r)'(x)&=& \left\lbrace -\frac{1}{k\cos k}-\frac{\tan k}{k(\tan k\cos kr-\sin kr)}\left(r-\frac{\cos kr}{\cos k}\right)\right\rbrace\sin kx\\ &+& \left\lbrace \frac{1}{k\sin k}+ \frac{1}{k(\tan k\cos kr-\sin kr)}\left(r-\frac{\cos kr}{\cos k}\right)\right\rbrace\cos kx,
        \end{eqnarray*}
      in $[-r,r]$
         \begin{eqnarray*}
          (w_r)'(x)=\left( \frac{1}{k\sin k} -\frac{r}{k\sin kr}\right)\cos(kx)
            \end{eqnarray*}
      and in $]r,1[$,
     \begin{eqnarray*}
     (w_r)'(x)&=& \left\lbrace \frac{1}{k\cos k}+\frac{\tan k}{k(\sin kr-\tan k\cos kr)}\left(\frac{\cos kr}{\cos k}-r\right)\right\rbrace\sin kx\\ &+& \left\lbrace \frac{1}{k\sin k}+ \frac{1}{k(\sin kr-\tan k\cos kr)}\left(\frac{\cos kr}{\cos k}-r\right)\right\rbrace\cos kx
       \end{eqnarray*}
   \begin{small}
    \begin{eqnarray*}
        \left\lVert \frac{(w^r)'}{\sqrt{2r}}\right\rVert^2_{L^2(\Omega_r)}&=& \frac{1}{2r}\left\lbrace -\frac{1}{k\cos k}-\frac{\tan k}{k(\tan k\cos kr-\sin kr)}\left(r-\frac{\cos kr}{\cos k}\right)\right\rbrace^2\left( \frac{1}{2}(1-r)-\frac{1}{4}(\sin 2k-\sin 2kr)\right)\\ &+& \frac{1}{2r}\left\lbrace \frac{1}{k\sin k}+ \frac{1}{k(\tan k\cos kr-\sin kr)}\left(r-\frac{\cos kr}{\cos k}\right)\right\rbrace^2\left( \frac{1}{2}(1-r)+\frac{1}{4}(\sin 2k-\sin 2kr)\right)\\
        &+& \frac{1}{2r}\left\lbrace \frac{1}{k\cos k}+\frac{\tan k}{k(\sin kr-\tan k\cos kr)}\left(\frac{\cos kr}{\cos k}-r\right)\right\rbrace^2\left( \frac{1}{2}(1-r)-\frac{1}{4}(\sin 2k-\sin 2kr)\right)\\ &+& \frac{1}{2r}\left\lbrace \frac{1}{k\sin k}+ \frac{1}{k(\sin kr-\tan k\cos kr)}\left(\frac{\cos kr}{\cos k}-r\right)\right\rbrace^2\left( \frac{1}{2}(1-r)+\frac{1}{4}(\sin 2k-\sin 2kr)\right)\\&\;&\rightarrow 0,\;\;\text{when}\;\;r\rightarrow 0
          \end{eqnarray*}
   \end{small}   
    \begin{small}
    \begin{eqnarray*}
    l_1(r)&=&-\frac{1}{\sqrt{2r}}\int_{\partial E_r}p_0'd'_E\left(\frac{w^r(x)}{\sqrt{2r}}\right)dH^0=-\frac{1}{\sqrt{2r}}p'(x)\frac{x}{\lvert x\rvert}\frac{w^r(x)}{\sqrt{2r}}\Big|_{x=-r}^r\\[0.2cm] &=&-\frac{1}{\sqrt{2r}}p'(r)\frac{r}{\lvert r\rvert}\frac{w^r(r)}{\sqrt{2r}} +\frac{1}{\sqrt{2r}}p'(-r)\frac{(-r)}{\lvert r\rvert}\frac{w^r(-r)}{\sqrt{2r}}=-\frac{1}{\sqrt{2r}}p'(r)\frac{w^r(r)}{\sqrt{2r}} -\frac{1}{\sqrt{2r}}p'(-r)\frac{w^r(-r)}{\sqrt{2r}}
    \\[0.2cm] &=& -\frac{\sin(kr)}{2r}\left(\frac{A}{2k}\cos kr-\frac{A}{2\tan k}\cos kr-\frac{A}{2}r\sin kr \right) \left( \frac{1}{k^2\sin k} -\frac{r}{k^2\sin kr}\right)
     \\[0.2cm] &\;\;& +\frac{\sin(kr)}{2r}\left(\frac{A}{2k}\cos kr-\frac{A}{2\tan k}\cos kr-\frac{A}{2}r\sin kr \right) \left( \frac{1}{k^2\sin k} -\frac{r}{k^2\sin kr}\right)=0
    \end{eqnarray*}
    \end{small}
In summary, from all the above, we have:  $\displaystyle \lim_{s \rightarrow 0}R(r)= \infty.$ \\
 From this example, considering the  min max approach developed by Delfour, in order to have the existence of a topological sensitivity which is a semi-differential, we will be led to restrict ourselves to specific cases to hope to calculate this topological semi-differential.

\section{ Homogenization with two scale convergence and topological semi differential}
We propose in this section a method allowing to pronounce on the limit of $R(s)$ when s tends to zero. Here we are looking for sufficient conditions to be able to calculate the limit of $R(s).$ Since this limit may exist or not, on is wondering if it is possible to propose a family of boundary value problems for which  it will be quite possible to get the limit of $R(s)$ when $s$ goes to $0.$\\
It is important to underline that in topological optimization, the perturbation  of domain introduces singular situations on the behavior of the state of the system, the most important object in the analysis, the so-called polarization matrix is to be  fully understood. The polarization matrices are present, either in explicit forms or implicit in all asymptotic formulae. For more details about this notion, see for instance \cite{NaSo}, \cite{AmKa}, \cite{TD2} and references therein.
 The method proposed here, is based on an asymptotic analysis approach and more particularly on two-scale convergence. For this method, we can therefore refer to the work of Allaire \cite{Allaire}, Nguetseng \cite{Nguetseng}, Fr\'enod et al. \cite{FRS:1999}, Faye et al.\cite{FaFreSe, FaFrSe1}. One can always show, depending on the field of study, the existence of a second scale either in relation to space or in relation to time. For this, we assume that the solution $\eta^s$ of the perturbed problem admits an asymptotic expansion of the form

\begin{equation}\label{asympt1}\eta^s(x)=\eta_0(x,\frac xs)+s\eta_1(x,\frac xs)+s^2\eta_2(x,\frac xs)+\ldots\end{equation}
where the function 
$\eta_i,\,\,i=0,1,\ldots$ are regular and periodic functions with respect to the second variable. Many characterization results of the $\eta_i(x,\frac xs)$ exist in the literature. One can quote, for example, the work of Allaire \cite{Allaire} and Ngetseng \cite{Nguetseng} in the case of elliptic equations, those of Faye et al \cite{FaFreSe, FaFrSe1} in the case of the parabolic equations and the work of Fr\'enod et al. \cite{FRS:1999} in the case of hyperbolic equations; see also \cite{BaSe}, where parabolic and hyperbolic equations are coupled.  We are going to  prove, in this paper, that if any sequence $\eta^s$ verifying an asymptotic expansion of the type (\ref{asympt1}) where the functions $\eta_i(x, y)$ are smooth
and $Y$-periodic in $y,$  and two-scale converges to the first term of the expansion, namely,
$\eta_0(x, y)$ in a sense that we shall precise. In many cases, $Y$ is choseen to be equals to $[0,1]^N,\Omega\subset R^N.$ In addition, we have:

\begin{equation}\label{asympt2} \eta^s(x)-\eta_0(x,\frac{x}{s})-s\eta_1(x, \frac xs)\longrightarrow 0\,\, \text{in}\,\,H^1(\Omega)\,\,\text{strongly}.
\end{equation}

Let $\eta_0$ be the two scale limit of $\eta^s$ and $\eta_0^s(x)=\eta_0(x, \frac xs).$
The proof of the main result of this part is based on the existence of a corrector term of the type (\ref{asympt2}), i.e.    $$ \frac{\eta^s-\eta_0^s}{s}\rightarrow \eta_1\,\,\text{in}\,\,H^1(\Omega). $$ So we have the following theorem
\begin{theorem} Considering $\eta^s$ the solution to (\ref{DFEqHelmpert0}) and $\eta_0^s=\eta_0^s(x)=\eta_0(x,\frac xs)$ where  $\eta_0$ is the two scale limits of $\eta^s,$ the following  estimate holds

$$\vert\vert\frac{\eta^s-\eta_0^s}{s} \vert\vert_{H^1(\Omega)}
\leq \alpha,$$
where $\alpha$ is a constant not depending on $s.$
Furthermore, the sequence $\frac{\eta^s-\eta_0^s}{s}$ two scale converges to a profile  $\eta_1\in H^1(\Omega, L^2_{\#}(Y)).$
\end{theorem}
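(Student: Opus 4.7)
The strategy is to leverage the assumed strong corrector convergence (\ref{asympt2}), namely $\eta^s - \eta_0^s - s\eta_1^s \to 0$ in $H^1(\Omega)$, and combine it with a refined energy estimate obtained directly from the PDE. The identity
$$\frac{\eta^s - \eta_0^s}{s} \;=\; \eta_1^s \;+\; \frac{\eta^s - \eta_0^s - s\eta_1^s}{s}$$
reduces the desired bound to two sub-bounds: uniform $H^1(\Omega)$ control of $\eta_1^s$ and an $O(s^2)$ rate for the remainder $r^s := \eta^s - \eta_0^s - s\eta_1^s$.

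First, I would insert the ansatz (\ref{asympt1}) into the weak formulation of (\ref{DFEqHelmpert0}) and expand in powers of $s$. Matching the $s^{-2}$ and $s^{-1}$ coefficients yields, respectively, that $\eta_0$ is independent of the fast variable $y$ and the standard cell problem determining $\eta_1(x,y)=-\sum_i \partial_{x_i}\eta_0(x)\,\chi_i(y)+\tilde{\eta}_1(x)$, where the $\chi_i$ are $Y$-periodic zero-mean correctors; matching the $s^0$ coefficient and invoking the Fredholm solvability condition on $Y$ produces the homogenized Helmholtz equation for $\eta_0(x)$. Once these structural equations are in hand, the residual of the ansatz tested against the PDE is of order $s$, so an energy estimate on $r^s$ — performed exactly as in the proof of the shape derivative in Section 2 by distinguishing the sign of $1-k^2 c(\Omega)\beta$ to bypass the indefiniteness of the Helmholtz operator — yields $\|r^s\|_{H^1(\Omega)}\le Cs^2$, hence $r^s/s\to 0$ in $H^1(\Omega)$. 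The uniform $H^1$ bound on $\eta_1^s$ follows from the explicit structure above: even though a direct differentiation of $\eta_1(x,x/s)$ produces an apparent $s^{-1}$ blow-up from $\nabla_y\eta_1$, this contribution is precisely the divergence-form term that cancels in the weak equation satisfied by $(\eta^s-\eta_0^s)/s$, so the energy estimate applied to this quotient (rather than to $\eta_1^s$ separately) is the cleanest way to read off the constant $\alpha$.

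For the two-scale convergence, the $H^1(\Omega)$ boundedness established above allows me to invoke the Nguetseng–Allaire compactness theorem \cite{Nguetseng, Allaire}: up to a subsequence, $(\eta^s-\eta_0^s)/s$ two-scale converges to some limit $u(x,y)\in L^2(\Omega\times Y)$. Testing against smooth oscillating functions $\varphi(x)\psi(x/s)$ with $\psi\in C^\infty_\#(Y)$ and passing to the limit using the corrector convergence (\ref{asympt2}) identifies $u$ with the profile $\eta_1$ constructed from the cell problem. The regularity of $\eta_0$ and of the correctors $\chi_i$ then forces $\eta_1\in H^1(\Omega,L^2_\#(Y))$, and uniqueness of the limit promotes the extracted subsequence to the full sequence.

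The hardest step is upgrading the qualitative convergence in (\ref{asympt2}) to the quantitative rate $\|r^s\|_{H^1}=O(s^2)$, because the ansatz $\eta_0^s+s\eta_1^s$ does not in general vanish on $\partial\Omega$: one must introduce a boundary-layer correction (a cutoff multiplied by $s\eta_1^s$ near $\partial\Omega$) and show that its $H^1$ cost is compatible with the desired rate, all while managing the non-coercive term $-k^2$ by the Fredholm / sign-splitting argument already used in Section 2. Once this boundary-layer control is secured, the remaining computations are routine.
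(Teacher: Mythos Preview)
Your proposal is correct and, in fact, more carefully structured than the paper's own argument, but it takes a noticeably different route. The paper does not attempt your quantitative $O(s^2)$ remainder estimate at all: it proceeds purely via two-scale convergence. Concretely, the paper tests the equation $\Delta\eta^s+\eta^s=f$ against oscillating test functions $\psi^s(x)=\psi(x,x/s)$, passes to the two-scale limit to obtain the system satisfied by $\eta_0$, and then simply invokes Allaire's corrector theorem as a black box to assert that $\eta^s-\eta_0-s\eta_1(\cdot,\cdot/s)\to 0$ strongly in $H^1$ and that $\nabla\eta^s-\nabla\eta_0-\nabla_y\eta_1(\cdot,\cdot/s)\to 0$ in $L^2$. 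From this it declares the quotient bounded. The paper then separately writes down the PDE satisfied by $(\eta^s-\eta_0^s)/s$, namely $\Delta(\cdot)+(\cdot)=\nabla_x\cdot\nabla_y\eta_0$, and reads off both the uniform $H^1$ bound and the two-scale limit $\eta_1$ from that equation.

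What your approach buys is rigor: you correctly flag the boundary-layer mismatch and the apparent $s^{-1}$ blow-up in $\nabla\eta_1^s$, neither of which the paper addresses. Your observation that the cleanest path is the energy estimate applied directly to the quotient PDE is exactly the step the paper ends on, so the two arguments converge there---but you arrive with a justification for why that step is necessary, whereas the paper treats it as a supplementary remark after already claiming the bound via Allaire. Your detour through the full $O(s^2)$ rate for $r^s$ is stronger than required and could be dropped: once you have the quotient PDE with a bounded right-hand side, the uniform $H^1$ bound follows from a single elliptic estimate (with the same sign-splitting trick on $1-k^2c(\Omega)\beta$ you mention), bypassing the remainder analysis entirely.
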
 
\begin{proof}The sequence $\eta^s$ is solution to the following equation
\begin{equation}\label{ref}\Delta \eta^s+\eta^s=f\,\,\text{in}\,\, \Omega\\
\end{equation}
Let us first characterized the equation satisfied  by the two scale limits.\\

Defining a test function as follows, $\psi^{s}(x)=\psi(x,\frac{x}{s})$ for any $\psi(x, y)$,
regular with compact support in $Y\times\torus^{2}$ and periodic in $y$ with period 1,
multiplying (\ref{ref}) by $\psi^{s}$  and integrating in $\Omega,$ we have

\begin{equation}\label{H3}\int_{\Omega}(\Delta \eta^s+\eta^s)\psi^s dx=\int_{\Omega}f\psi^{s}dtdx.
\end{equation}
Then using Green formula  in the first integral over $\Omega$ and the boundary condition, we get  
\begin{equation}\label{H4} 
\int_{\Omega}\big(-\nabla \eta^s\nabla \psi^s+ \eta^s\psi^s\big)dx=\int_{\Omega}f\psi^s dx
\end{equation}
But
\begin{equation}\label{H5}
 \nabla\psi^s=\left(\nabla_x \psi\right)^{s} +\frac1s\left(\nabla_y\psi\right)^{s},
\end{equation}
where
\begin{equation}\label{H4a}
  \left(\nabla_x\psi\right)^{s}(x)
   =\nabla_x\psi(x,\frac{x}{s})\,\,\textrm{ and }  \left(\nabla_y\psi\right)^{s}(x)=\nabla_y\psi(x,\frac xs),
\end{equation}
then we have
\begin{equation}\label{H6}
\int_{\Omega}\big(-\nabla \eta^s\Big(\left(\nabla_x \psi\right)^{s} +\frac1s\left(\nabla_y\psi\right)^s\Big)+ \eta^s\psi^s\big)dx=\int_{\Omega}f\psi^s dx
\end{equation}
Using the two-scale convergence due to Nguetseng \cite{Nguetseng} and
Allaire \cite{Allaire} (see also Fr\'enod Raviart and Sonnendr\"{u}cker \cite{FRS:1999}),  if a sequence $f^{s}$ is bounded in $L^{2}(\Omega)$, then there exists a profile
$U(x,y)$, periodic of period 1 with respect to $y$, such that for all
$\psi(x,y)\in \mathcal D(\Omega, \mathcal C^\infty(Y)),$ we have
 \begin{equation}\label{H7}
    \int_{\Omega}\int_{Y}f^{s}\psi^{s}dx
   \longrightarrow\int_{\Omega}\int_{Y}U\psi dydx,
 \end{equation}
 for a subsequence extracted from $(f^{s})$.
 \\
Multiplying (\ref{H6}) by $s$ and passing to the limit as $s\rightarrow 0$ and using (\ref{H7})
we have
    \begin{equation}\label{H8}
\int_{\Omega}\int_Y\Big(-\nabla \eta_0\nabla_x\psi+\eta_0\psi\Big)dydx =\int_{\Omega}\int_Y f\psi dydx,
    \end{equation} and
    \begin{equation}\label{H08}
    \int_\Omega \int_Ydiv_y(\nabla_x\eta_0)\psi dx\,dy=0.
    \end{equation}
We obtain from (\ref{H8}) the equation satisfied by $\eta_0$:
\begin{equation}\label{H10}\left\{\begin{array}{ccc}
    \Delta\eta_0+\eta_0 =f\,\,\text{in}\,\,\Omega\times Y\\
    \eta_0=0\,\,\text{on}\,\,\partial\Omega\\
    \nabla_y\cdot\nabla_x\eta_0=0\,\,\text{in}\,\,\Omega\times Y\end{array}\right.
    \end{equation}
For  the following, let
\begin{equation}
\eta^s_0(x)=\eta_0(x,\frac xs),
\end{equation} where $\eta_0$ is solution to (\ref{H10}). Then we have
\begin{equation}\nabla \eta_0^s= \left(\nabla_x \eta_0^s\right)^{s} +\frac1s\left(\nabla_y\eta_0^s\right)^{s},
\end{equation}
where
\begin{equation}\label{H4a0}
  \left(\nabla_x\eta_0^s\right)^{s}(x)
   =\nabla_x\eta_0^s(x,\frac{x}{s})\,\,\textrm{ and }  \left(\nabla_y\eta_0^s\right)^{s}(x)=\nabla_y\eta_0^s(x,\frac xs),
\end{equation}
It follows from a result of Allaire \cite{Allaire}, that the sequence $\eta^s$ two scale converge to a function $\eta_(x)=\int_Y\eta_0(x,y)dy$ and the sequence 
$$\eta^s(x)-\eta_0(x)-s\eta_1(x, \frac xs)\rightarrow 0\,,\,\text{in}\,\, H^1(\Omega)\,\,\text{strongly}$$ and the 
$$\nabla \eta^s(x)-\nabla \eta_0(x)-\nabla_y\eta_1(x, \frac xs) $$ converges strongly to zero in 
$(L^2(\Omega))^N.$ This  result prove that  the sequence $\frac{\eta^s-\eta_0^s}{s}$ is bounded and two scale converges to a profile $\eta_1\in H^1(\Omega, L^2_{\#}(Y)).$\\
In an other hand, $\frac{\eta^s-\eta_0^s}{s}$ is solution to 
\begin{equation} 
\Delta\frac{\eta^s-\eta_0^s}{s}+\frac{\eta^s-\eta_0^s}{s}=\nabla_x\cdot(\nabla_y \eta_0)
\end{equation}
The sequence  $\frac{\eta^s-\eta_0^s}{s}$ is bounded in $H^1(\Omega)$ then, from a result of Allaire \cite{Allaire} and Nguetseng \cite{Nguetseng}, it two scale converges to a profile $\eta_1$ solution to 

\begin{equation}\left\{\begin{array}{ccc}
\Delta\eta_1+\eta_1= \nabla_x\cdot(\nabla_y \eta_0)=0\,\,\text{in}\,\,\Omega\times Y\\
\eta_1=0\,\,\partial\Omega,\\
y\rightarrow \eta_1 \,\,\text{is 1 periodic}\end{array}\right.\end{equation}


\end{proof}

\begin{theorem} Under asymptions (\ref{asympt1}),(\ref{asympt2}) and Hypotheses \textbf{(H0), (H1), (H2), (H3)}, If $R(s)$ is given by 
(\ref{rectifD}) and (\ref{R(s)3}); the limits of $R(s)$ exists et tends to zero as $s\rightarrow 0.$\\
In addition, the functional  defined by (\ref{fye}) is differentiable and the topoogical derivative is given by
\begin{equation}\label{derivv}DJ(\Omega)=\frac{d}{ds}L(0, \eta^0, p^0)\end{equation}
where  $L$ designates the Lagrangian, $\eta^0$ and $p^0$  are solutions to the direct and adjoint states.
\end{theorem}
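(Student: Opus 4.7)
The plan is to show that the remainder $R(s)$ given by (\ref{rectifD}) and (\ref{R(s)3}) vanishes as $s\searrow 0$, and then to conclude via the abstract identity (\ref{formule}) already established for the Lagrangian method. The key ingredients are the asymptotic expansion (\ref{asympt1})--(\ref{asympt2}) and the uniform bound $\|(\eta^s-\eta_0^s)/s\|_{H^1(\Omega)}\le\alpha$ furnished by the preceding two-scale theorem. Moreover, the cell equation (\ref{H10}) together with the constraint $\nabla_y\cdot\nabla_x\eta_0=0$ forces the leading two-scale profile $\eta_0$ to reduce to a function of $x$ alone, which I would identify with the unperturbed state $\eta^0$ (up to an $H^1$-error of order $s$ controlled by (\ref{asympt2})).

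First, I would treat the volume piece $l_0(r)=\|\nabla((\eta_r-\eta^0)/s^{1/2})\|^2_{L^2(\Omega_r)}+\|(\eta_r-\eta^0)/s^{1/2}\|^2_{L^2(\Omega_r)}$. Rewriting $l_0(r)=s\,\|(\eta^s-\eta_0^s)/s\|^2_{H^1(\Omega_r)}$ and applying the previous theorem yields $l_0(r)\le\alpha^2 s\to 0$ as $s\searrow 0$, which disposes of both quadratic terms at once.

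Next, for the boundary piece $l_1(r)=s^{-1/2}\int_{\partial E_r\cap\partial\Omega_r^m}\nabla p^0\cdot\nabla d_E\,(\eta_r-\eta^0)/s^{1/2}\,dH^{N-1}$, I would combine Cauchy--Schwarz with the standard trace inequality on $\partial E_r$. Setting $v=(\eta_r-\eta^0)/s^{1/2}$, one obtains $|l_1(r)|\le s^{-1/2}\,\|\nabla p^0\|_{L^2(\partial E_r)}\,\|v\|_{L^2(\partial E_r)}$, and the trace bound $\|v\|_{L^2(\partial E_r)}\le C(r)\,\|v\|_{H^1(\Omega_r)}\le C(r)\,\alpha\,s^{1/2}$ from step one reduces the problem to showing $C(r)\,\|\nabla p^0\|_{L^2(\partial E_r)}\to 0$. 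Under Hypothesis 1 and the coercivity of the adjoint problem (\ref{etatp0N}), both factors are estimated through $H^{N-1}(\partial E_r)$, which vanishes as $r\searrow 0$.

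Once $\lim_{s\searrow 0}R(s)=0$ has been obtained, the differentiability of $J$ and the formula (\ref{derivv}) follow directly from the identity $dg(0)=d_sL(0,\eta^0,p^0)+R(\eta^0,p^0)$ proved earlier. The main obstacle will be the precise control of the boundary term $l_1(r)$: the trace constant $C(r)$ on the shrinking interface $\partial E_r$ may degenerate as $r\searrow 0$, so one must either invoke extra regularity of the adjoint $p^0\in H^2(\Omega)$ (granted by elliptic regularity when the data are smooth) to gain $\|\nabla p^0\|_{L^2(\partial E_r)}=O(r^{(N-d-1)/2})$, or exploit the corrector $\eta_1$ from the two-scale expansion to cancel the leading boundary contribution. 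A secondary delicate point is the rigorous identification of $\eta^0$ with $\eta_0^s$ on the relevant scale, which must be verified from (\ref{asympt2}) and the structure of the cell problem (\ref{H10}).
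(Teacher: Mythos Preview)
Your proposal follows essentially the same route as the paper. For the volume term $l_0(r)$ you rewrite it as $s\,\|(\eta^s-\eta_0)/s\|_{H^1(\Omega_r)}^2\le \alpha^2 s\to 0$, which is exactly the paper's argument (the paper writes this with characteristic functions $\chi_{\Omega_r}$ and invokes the bound from the preceding two-scale theorem). For the conclusion you appeal to $dg(0)=d_sL(0,\eta^0,p^0)+R(\eta^0,p^0)$, as does the paper.

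Where you differ is in the handling of the boundary piece $l_1(r)$: the paper dispatches it in one sentence (``For the same reasons, the expression given by $l_2(r)$ will also cancel''), whereas you attempt a genuine estimate via Cauchy--Schwarz and a trace inequality, and you flag the possible degeneration of the trace constant on the shrinking interface $\partial E_r$. Your caution here is well placed; the paper does not supply this detail. Your observation that the cell problem (\ref{H10}) forces the leading profile $\eta_0(x,y)$ to be independent of $y$, so that $\eta_0^s$ may be identified with the unperturbed state $\eta^0$, is also the implicit step the paper relies on when it writes ``if $\eta^0(x)=\eta_0(x,x/s)$''. In short, your strategy coincides with the paper's and is, if anything, more explicit about the two points the paper treats informally.
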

\begin{proof}
The proof is identical for the two cases considered. Without loss of generality, we will only consider the case where $R(s)$ is given by (\ref{rectifD}), i.e.

$$R(r)=\int_{\Omega_r}\left\lvert \nabla\left(\frac{\eta_r-\eta_0}{s^{1/2}}\right)\right\rvert^2dx+\int_{\Omega_r}\left\lvert \frac{\eta_r-\eta_0}{s^{1/2}}\right\rvert^2dx-
\frac{1}{s^{1/2}}\int_{\partial E_r\cap \partial\Omega_r^m}\nabla p_0\cdot\nabla d_E\left(\frac{\eta_r-\eta_0}{s^{1/2}}\right) dx.$$
To prove that $R(r)$ tends to zero, it is sufficient to show that the expressions of $l_1(r)$ and $l_2(r)$ given in Theorem \ref{neutheo} tend to zero when $r\rightarrow 0.$

We are therefore going to focus on the convergence of each of the terms of the expression of $R(r).$ From (\ref{asympt1}) and  (\ref{asympt2}), we get that; if $\eta^0(x)=\eta_0(x,\frac xs),$ the sequence $\frac{\eta^s-\eta^0}{s}$ to scale converges to a profile $\eta^1\in L^\infty_{\#}((\mathbb R, L^2(\Omega)).$ This makes possible to get the following result
\begin{equation}\label{correct}\vert\vert \frac{\eta^s-\eta_0}{s} \vert\vert_{H^1(\Omega)}\leq\alpha\end{equation}
where $\alpha$ is a constant not depending on $s.$\\
The parameters $s$ and $r$ are proportional, when $r$ goes to zero, $s$ also goes to zero. Instead of $\eta_r$ we can write $\eta_s$. So using the fact that, for small $s,\,\,\vert\vert \frac{\eta^s-\eta_0}{\sqrt s} \vert\vert_{H^1(\Omega)}\leq \vert\vert \frac{\eta^s-\eta_0}{s} \vert\vert_{H^1(\Omega)}\leq\alpha$ we have

$$l_1(r)=l'_1(s)=\sqrt s\int_{\Omega}\chi_{\Omega_r}\left\lvert \nabla\left(\frac{\eta_s-\eta_0}{s}\right)\right\rvert^2dx+\sqrt s\int_{\Omega}\chi_{\Omega_r}\left\lvert \frac{\eta_s-\eta_0}{s}\right\rvert^2dx$$
Taking into account (\ref{correct}), we get that as $\lim_{r\rightarrow 0}l_1(r)=0.$ For the same reasons, the expression given by $l_2(r)$ will also cancel. This means that the limit of $R(r)$ will tend to 0.
For the proof of the last result, it is only necessary to note that $R(\eta^0,p^0)=\lim_{r\rightarrow0}R(r)=0.$  By application of Sturm's and Delfour's theorem, the shape derivative or the topological derivative are nothing else than the derivative of the Lagrangian with respect to the variable $s.$ Then (\ref{derivv}) is true.
\end{proof}
\subsection*{ACKNOWLEDGEMENT}
This work has been supported by the Deutsche Forschungsgemeinschaft
within the Priority program SPP 1962 "Non-smooth and Complementarity based Distributed Parameter Systems: Simulation and Hierarchical Optimization". The authors would like to thank Volker Schulz ( University Trier, Trier, Germany) and Luka Schlegel (University Trier, Trier, Germany) for helpful and interesting discussions within the project Shape Optimization Mitigating Coastal Erosion (SOMICE).

\end{document}